\newcommand{\cS}{\mathcal{S}}
\newcommand{\cP}{\mathcal{P}}
\newcommand{\cV}{\mathcal{V}}
\newcommand{\cN}{\mathcal{N}}
\newcommand{\cR}{\mathcal{R}}
\newcommand{\cT}{\mathcal{T}}
\newcommand{\cC}{\mathcal{C}}
\newcommand{\hP}{\hat{\cP}}
\newcommand{\hS}{\hat{\cS}}
\newcommand{\st}{\,\vert\,}
\newcommand{\floor}[1]{\left\lfloor{#1}\right\rfloor}
\newcommand{\ceil}[1]{\left\lceil{#1}\right\rceil}
\renewcommand{\o}[1]{\overline{#1}}
\newtheorem{theorem}{Theorem}[section]
\newtheorem{corollary}[theorem]{Corollary}
\newtheorem{lemma}[theorem]{Lemma}
\newtheorem{proposition}[theorem]{Proposition}
\theoremstyle{definition}
\newtheorem{definition}[theorem]{Definition}
\newtheorem{example}[theorem]{Example}
\title[Foldability of Words]{$k$-Foldability of Words}
\author[Bjorkman et al.]{Beth Bjorkman}
\address[Bjorkman]{Department of Mathematics, Iowa State University%, Ames, IA 50011 
\newline \indent \textnormal{Work completed while affiliated with Washington University in St. Louis.}}
\email{bjorkman@iastate.edu}
\author[]{Garner Cochran}
\address[Cochran]{Department of Mathematics, University of South Carolina}%, Columbia, SC 29208}
\email{gcochran@math.sc.edu}
\author[]{Wei Gao}
\address[Gao]{Department of Mathematics and Statistics, Auburn University}%, Auburn, AL 36849}
\email{wzg0021@auburn.edu}
\author[]{Lauren Keough}
\address[Keough]{Mathematics Department, Grand Valley State University}%, Allendale, MI 49501}
\email{keoulaur@gvsu.edu}
\author[]{Rachel Kirsch}
\address[Kirsch]{Department of Mathematics, University of Nebraska - Lincoln}%, NE 68588}
\email{rkirsch@huskers.unl.edu}
\author[]{Mitch Phillipson}
\address[Phillipson]{School of Natural Sciences, St. Edward's University}%, Austin, TX 78704}
\email{mphilli2@stedwards.edu}
\author[]{Danny Rorabaugh}
\address[Rorabaugh]{Department of Mathematics and Statistics, Queen's University}%, Kingston, ON K7L 3N6}
\email{rorabaugh@mast.queensu.ca}
\author[]{Heather Smith}
\address[Smith]{School of Mathematics, Georgia Institute of Technology}%, Atlanta, GA 30332}
\email{heather.smith@math.gatech.edu}
\author[]{Jennifer Wise}
\address[Wise]{\textnormal{Work completed while affiliated with the \textsc{University of Illinois at Urbana-Champaign.}}}
\email{jiwise2@illinois.edu}
\begin{document}

%\linenumbers

\begin{abstract}
We extend results regarding a combinatorial model introduced by Black, Drellich, and Tymoczko (2017+) which generalizes the folding of the RNA molecule in biology. 
Consider a word on alphabet $\{A_1, \o{A}_1, \ldots, A_m, \o{A}_m\}$ in which $\o{A}_i$ is called the complement of $A_i$. 
A word $w$ is foldable if can be wrapped around a rooted plane tree $T$, starting at the root and working counterclockwise such that one letter labels each half edge and the two letters labeling the same edge are complements. 
The tree $T$ is called $w$-valid. 

We define a bijection between edge-colored plane trees and words folded onto trees. 
This bijection is used to characterize and enumerate words for which there is only one valid tree. 
We follow up with a characterization of words for which there exist exactly two valid trees. 

In addition, we examine the set $\cR(n,m)$ consisting of all integers $k$ for which there exists a word of length $2n$ with exactly $k$ valid trees. 
Black, Drellich, and Tymoczko showed that for the $n$th Catalan number $C_n$, $\{C_n,C_{n-1}\}\subset \cR(n,1)$ but $k\not\in\cR(n,1)$ for $C_{n-1}<k<C_n$. 
We describe a superset of $\cR(n,1)$ in terms of the Catalan numbers by which we establish more missing intervals.  
We also prove $\cR(n,1)$ contains all non-negative integer less than $n+1$.
\end{abstract}

\keywords{Catalan numbers, plane trees, non-crossing perfect matchings}
\subjclass[2010]{05A15, 05C05, 20M05}
%05A:15 is Enumerative Combinatorics: Exact enumeration problems, generating functions
%05C:05 is Graphs Theory: Trees
%20M:05 is Semigroups: Free semigroups, generators and relations, word problems
%http://www.ams.org/msc/msc2010.html

\maketitle

%%%%%%%%%%%%%%%%%%%%%%%%%
\section{Introduction}

The molecule ribonucleic acid (RNA) consists of a single strand of the four nucleotides adenine, uracil, cytosine, and guanine. In short, RNA is representable by finite sequences (or words) from the alphabet $A$, $U$, $C$, and $G$, lending itself to combinatorial study. In contrast to the double helix of DNA, the single-stranded nature of RNA often results in RNA folding onto itself as the nucleotides form bonds. As in DNA, we have the Watson-Crick pairs so that $C$ and $G$ form bonds and $A$ and $U$ form bonds. However, RNA has one more bond that may form, the wobble pair $G$ and $U$. %\todo{BB: is `wobble' a technical term? \\ GC: Indeed it is. See \url{https://en.wikipedia.org/wiki/Wobble_base_pair}} 
It is worth noting that when RNA folds onto itself, not all nucleotides on a strand form bonds. Predicting the folded structure of RNA is important as the folded structure gives indication of its functionality. 

In this paper, we direct our attention to a generalized combinatorial model, motivated by the folding of RNA. This model was first introduced by Black, Drellich, and Tymoczko~\cite{black2015} with an initial restriction made to the Watson-Crick bonding
pairs, leaving the potential $GU$ bond for future study. With this restriction, we relabel our words to use the letters $A_1$, $\o{A}_1$, $A_2$, $\o{A}_2$ where $A_1$ only bonds with $\o{A}_1$ and $A_2$ only bonds with $\o{A}_2$. 

Further, we do not limit ourselves to an alphabet with only four letters. 
In particular, fix an integer $m\geq 1$ and expand the alphabet to $A_1, \o{A}_1, A_2, \o{A}_2, \ldots, A_m, \o{A}_m$ where $A_i$ and $\o{A}_i$ are called \emph{complements} and $A_i$ may only form a bond with $\o{A}_i$ and vice versa. We say that this is an alphabet on $m$ letters and their complements. Define the length of a word $w$ to be the number of letters in the word, letting $\varepsilon$ be the word of length zero (the empty word). 

As in~\cite{black2015}, we assume that when a word folds onto itself, every letter is matched with exactly one other letter. Thus, a \emph{folding} of a word can be represented by a non-crossing perfect matching of the letters so that two matched letters are complements.

Recall that the Catalan numbers enumerate the non-crossing perfect matchings on $2n$ points (see \cite{stanleyec2}, problem 6.19, part o). 
In our model, the underlying word restricts the allowable matching edges based on the letter corresponding to each point. %\todo{DR: This sentence is quite a mouthful. Do we lose meaning if we replace "induces restrictions" with "restricts"? RMK: I made this change} 
However, the word $A_1 \o{A}_1A_1 \o{A}_1\ldots A_1 \o{A}_1$ admits every non-crossing perfect matching. 

Section~\ref{sec:prelim} contains some preliminaries and background from the work of Black, Drellich, and Tymoczko~\cite{black2015}. 
In Section~\ref{sec:1fold}, we define a bijection between foldings of words and edge-colored plane trees. 
This is used to enumerate the words which fold in precisely one way, a problem posed in \cite{black2015}. 
We also characterize $2$-foldable words by a decomposition in terms of $1$-foldable words. 
Section ~\ref{sec:R(n,m)} is devoted to studying the set $\cR$ of integers $k$ such that there is a word which folds in precisely $k$ ways. We give a superset of $\cR$, making a strong connection with Catalan numbers. 
In search of the smallest value which is not found in $\cR$, we also determine a large consecutive set of small values in $\cR$.

%%%%%%%%%%%%%%%%%%%%%%%%%
\section{Preliminaries}\label{sec:prelim}

In addition to non-crossing perfect matchings, the Catalan numbers also enumerate plane trees. A \emph{plane tree} is a straight line drawing of a rooted tree embedded in the plane with the root above all other vertices. This induces a left-to-right ordering of the children of a vertex. To obtain an ordering on the half edges of a plane tree, start at the root and trace the perimeter counterclockwise, touching each side of an edge exactly once. 

Fix a word $w=w[1]w[2]\cdots w[2n]$ and let $T$ be a plane tree with $n$ edges. Following the order of the half edges, label the $i^{th}$ half edge of $T$ with $w[i]$. We say that $T$ is \emph{$w$-valid} if for each edge of $T$, the two letters
from $w$ which label that edge are complements. 
%\todo{DR: Is it stylistically okay for us to define $w$-valid and foldable back to back, one outside of a numbered definition block and one inside? RMK: It doesn't bother me}
\begin{definition}
 A word $w$ is said to be \emph{foldable} if there is a plane tree that is $w$-valid. 
 For integer $k\ge 0$, a word $w$ is \emph{$k$-foldable} if there are exactly $k$ plane trees that are
$w$-valid.
\end{definition}

For example, 
$w=A_1\o{A}_1A_1A_2\o{A}_2\o{A}_1$ is $2$-foldable as seen in Figure~\ref{fig-prelim-basic}. The corresponding non-crossing perfect matchings are also given. Further, the word $w_n=(A_1\o{A}_1)^n$ is $C_n$-foldable as every plane tree with $n$ edges is $w_n$-valid.

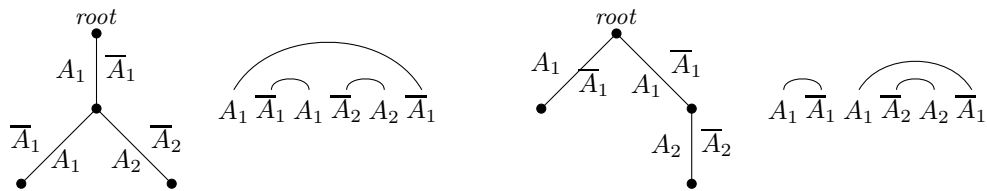
\begin{figure}[!ht]
 \begin{tikzpicture}
 \foreach \x in {1,2} \draw (0,\x) [fill=black] circle (0.06);
 \draw (-1,0)--(0,1)--(0,2);
 \draw [above] (0,2) node {\textit{\small{root}}};
 \draw (1,0) [fill=black] circle (0.06);
 \draw (-1,0) [fill=black] circle (0.06);
 \draw (0,1)--(1,0);
 \draw (0,1.5) [left] node {$A_1$};
 \draw (-.4,.3)  node {$A_1$};
 \draw (-.6,.6) [left] node {$\o{A}_1$};
 \draw (.4,.3)  node {$A_2$};
 \draw (.6,.6) [right] node {$\o{A}_2$};
 \draw (0,1.54) [right] node {$\o{A}_1$};
 \end{tikzpicture}
 \hspace{0in}
 \begin{tikzpicture}
 \foreach \x in {1,2} \draw (\x,-.05) node {$A_1$};
 \foreach \x in {3} \draw (\x,-.05) node {$A_2$};
 \foreach \x in {1,3} \draw (\x+.5,0) node {$\o{A}_1$};
 \foreach \x in {2} \draw (\x+.5,0) node {$\o{A}_2$};
 \draw (1,0.25) to [out=60, in=120] (3.5,0.25);
 \foreach \x in {1,2} \draw (\x+.5,0.25) to [out=90, in =90] (\x+1,0.25);
 \draw [color=white](2,-1) circle (0.06);
 \end{tikzpicture}
 \hspace{.3in}
 \begin{tikzpicture}
 \foreach \x in {1} \draw (0,\x) [fill=black] circle (0.06);
 \draw [above] (0,1) node {\textit{\small{root}}};
 \draw (-1,0)--(0,1);
 \draw (1,0) [fill=black] circle (0.06);
 \draw (-1,0) [fill=black] circle (0.06);
 \draw (0,1)--(1,0);
 \draw (-.3,.35)  node {$\o{A}_1$};
 \draw (-.6,.6) [left] node {${A}_1$};
 \draw (.4,.3)  node {$A_1$};
 \draw (.6,.6) [right] node {$\o{A}_1$};
 \draw (1,-1) [fill=black] circle (0.06);
 \draw (1,-1)--(1,0);
 \draw (1,-.5) [left] node {$A_2$};
 \draw (1,-.45) [right] node {$\o{A}_2$};
 \end{tikzpicture}
 \hspace{0in}
  \begin{tikzpicture}
 \foreach \x in {1,2} \draw (\x,-.05) node {$A_1$};
 \foreach \x in {3} \draw (\x,-.05) node {$A_2$};
 \foreach \x in {1,3} \draw (\x+.5,0) node {$\o{A}_1$};
 \foreach \x in {2} \draw (\x+.5,0) node {$\o{A}_2$};
 \draw (2,0.25) to [out=60, in=120] (3.5,0.25);
 \foreach \x in {1,2.5} \draw (\x,0.25) to [out=90, in =90] (\x+.5,0.25);
 \draw [color=white](2,-1) circle (0.06);
 \end{tikzpicture}
 \caption{The two foldings of $w=A_1\o{A}_1A_1A_2\o{A}_2\o{A}_1$ and their corresponding non-crossing perfect matchings.}
 \label{fig-prelim-basic}
\end{figure}

%\todo{DR: I found our description of the greedy folding confusing. If you are happy with my rewrite below, please delete this todo. RMK: I have read this and am going to leave it as is so someone else can make a decision. LDK: looks good to me. Commenting this out.}
Black, Drellich, and Tymoczko~\cite{black2015} defined the following greedy algorithm to produce a folding of $w$. 
Given a word $w$ of length $n$, for each $i \leq n$ starting at $i=1$, create a matching as follows: 
Match $w[i]$ with $w[j]$ provided $j$ is the largest index such that $j<i$, $w[j]$ is not yet matched, and $w[j]$ is a complement of $w[i]$. 
If no such $j$ exists, leave $w[i]$ (temporarily) unmatched. 
If $w$ is foldable, this algorithm will produce a non-crossing perfect matching of $w$\cite{black2015}; 
the folding produced by the greedy algorithm is called the \emph{greedy folding}. 
The folding on the right in Figure~\ref{fig-prelim-basic} is the greedy folding. 

We will examine the following four sets in more detail.

\begin{definition}[Black, Drellich, and Tymoczko~\cite{black2015}]
Fix $n,m\in \mathbb{Z}^+$ and $k\in \mathbb{Z}^+ \cup \{0\}$. Let $\cS = \mathcal{S}(n,m)$ be the collection of words of length $2n$ from an alphabet 
with $m$ letters and their complements. For $w\in \cS$, define the 
following quantities:
\begin{itemize}
\item $\cP(n,m)$ is the set of words in $\cS$ that are foldable.
\item $\cS_k(n,m)$ is the set of words in $\cS$ that are $k$-foldable.\footnote{Note that~\cite{black2015} uses $\cN(n,m,k)$ instead of $\cS_k(n,m)$.}
\item $\cV(w)$ is the set of plane trees that are $w$-valid.
\item $\cR(n,m)$ is the set of integers $k$ for which $\cS_k(n,m)$ is non-empty.
\end{itemize}
\end{definition}

The set $\cS(n,m)$ can also be viewed as the length-$2n$ elements of the free group on $m$ generators. 
However, we are primarily interested in foldable words, which are precisely those that reduce to the identity element in the free group, so we make no further group theoretic connections. 

Let $w\in \mathcal{P}(n,m)$. 
Heitsch, Condon, and Hoos~\cite{heitsch} defined a local move to transform one plane tree in $\mathcal{V}(w)$ into another. 
For two trees in $\cV(w)$, there is a move from one to the other if there is a pair of edges that can be re-paired as in Figure~\ref{fig-prelim-decomp}. 
This defines a directed graph $G_w$ with a vertex for each plane tree in $\mathcal{V}(w)$ and an edge from $T_1$ to $T_2$ when there is a Type~1 move from $T_1$ to $T_2$. The following were proved in~\cite{black2015}.
\begin{theorem}[See Section~3 in~\cite{black2015}]\label{thm-graph}
Let $w$ be a foldable word.
\begin{enumerate}
 \item The greedy folding is a unique source of $G_w$.
 \item If $T_0$ is the greedy folding and $T\in \mathcal{V}(w)$,
  then there exists a path in $G_w$ from $T_0$ to $T$.
\end{enumerate}
\end{theorem}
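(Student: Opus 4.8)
The plan is to work in the non-crossing perfect matching picture (arcs on $2n$ points) rather than with the trees directly, since under the correspondence already set up an edge of a $w$-valid tree is exactly an arc $(i,j)$ with $i<j$ and $w[i]=\o{w[j]}$, while ancestry of edges corresponds to nesting of arcs and incomparable edges correspond to side-by-side arcs. A Type~1 move re-pairs two arcs, turning a side-by-side pair $(p_1,p_2),(p_3,p_4)$ with $p_1<p_2<p_3<p_4$ into the nested pair $(p_1,p_4),(p_2,p_3)$; validity of both configurations forces the four letters to alternate as $X,\o{X},X,\o{X}$. The first observation is that this move is governed by the monovariant $L(T)=\sum_{(i,j)}(j-i)$, the total span of the arcs: a direct computation gives $L(\text{after})-L(\text{before})=2(p_3-p_2)>0$, so $L$ strictly increases along every directed edge of $G_w$. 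In particular $G_w$ is acyclic, which is the backbone of both parts.

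Next I would characterize the in-edges of a vertex. Reversing a move, a tree $T$ has an incoming edge exactly when it contains a reducible nested pair, and (after checking the non-crossing bookkeeping) this happens precisely when $T$ has two adjacent positions $i,i+1$ that are both left endpoints of arcs and satisfy $w[i]=\o{w[i+1]}$; the reverse move then splits the outer arc and its first child into the leaf $(i,i+1)$ and one longer arc. With this in hand, the claim that the greedy folding $T_0$ is a source is short: if $i,i+1$ were adjacent, complementary, and both left endpoints, then when the greedy algorithm reaches index $i+1$ the position $i$ is still unmatched (its partner lies to the right) and is the nearest preceding complement, so greedy would match $i$ with $i+1$, contradicting both being left endpoints. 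Hence $T_0$ has no incoming edge.

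The heart of the argument is showing that every $T\neq T_0$ \emph{does} have an incoming edge. I would prove the contrapositive by an exchange argument. Let $j$ be the least index at which the matching of $T$ differs from the greedy matching; by minimality the two agree on all arcs inside $[1,j-1]$, so they have the same set of open left endpoints just before $j$. Comparing how $T$ and greedy treat $j$ then produces, inside $T$, a pair of adjacent complementary left endpoints, i.e.\ an incoming edge. The delicate point—and the step I expect to be the main obstacle—is managing the bookkeeping of which positions are ``open'' together with the non-crossing constraints, so that the witnessing pair is genuinely \emph{adjacent} and yields a legal reverse move. Granting these two facts, both conclusions follow from acyclicity and finiteness: $T_0$ is a source, and any source is incoming-edge-free and hence equals $T_0$, giving the uniqueness in part~(1); and from any $T$ we may repeatedly follow incoming edges, strictly decreasing $L$, until we necessarily stop at the unique incoming-edge-free vertex $T_0$, so reversing this walk is a directed path from $T_0$ to $T$, which is part~(2).
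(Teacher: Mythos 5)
The paper does not prove this theorem; it is quoted from Black--Drellich--Tymoczko, so there is no internal proof to compare against. Judged on its own terms, your architecture (a monovariant making $G_w$ acyclic, plus ``greedy is the unique vertex with no in-edge'') is the right shape, and the computation $L(\text{after})-L(\text{before})=2(p_3-p_2)>0$ is correct. But there are two genuine gaps. First, your characterization of in-edges is wrong in the ``only if'' direction. In Figure~\ref{fig-prelim-decomp} the subtree labelled $2$ may be nonempty, so the reverse of a Type~1 move applies to \emph{any} arc $(p_2,p_3)$ that is a child of $(p_1,p_4)$ with $w[p_2]=\o{w[p_1]}$ --- the child need not be the first child, and $p_2$ need not equal $p_1+1$. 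Consequently a tree can have an incoming edge with no adjacent pair of complementary left endpoints, and your source argument (which only rules out adjacent pairs in the greedy folding) does not establish that $T_0$ has no in-edge. That particular hole is repairable: when greedy reaches $p_2$, every position strictly between $p_1$ and $p_2$ is already matched (all arcs starting there also end there, by non-crossingness), so $p_1$ would be the nearest available complement and greedy would have closed it --- adjacency was never the relevant condition.

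The second gap is the one you flagged yourself, and it is the heart of the theorem: you do not actually prove that every $T\neq T_0$ has an in-edge, and the statement you aim to extract from the first disagreement with greedy is not the one that comes out. Note first that at the least index $j$ where the two matchings assign different partners, \emph{both} matchings have $j$ as a left endpoint (if either matched $j$ leftward to some $i<j$, minimality would force the other to agree), so the divergence is better located at the first index where greedy matches leftward while $T$ leaves that index open. What that comparison yields is an arc of $T$ together with a proper \emph{ancestor} arc whose left endpoint carries the complementary letter --- not an adjacent pair, and not even necessarily a parent--child pair. Upgrading ancestor--descendant to parent--child (which is what an in-edge requires) needs a further argument: over a single complementary pair of letters it follows by pigeonhole on consecutive generations, but over a larger alphabet a nested chain of arcs with left letters $A$, $B$, $\o{A}$ has a complementary ancestor--descendant pair and no complementary parent--child pair (indeed, for such words the greedy algorithm as stated need not even produce a non-crossing matching, e.g.\ $A B \o{A} A \o{B}\, \o{A}$). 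So the central lemma is both unproven and, in the form you intend to prove it (adjacency of the witnessing positions), false. The concluding deductions of parts (1) and (2) from acyclicity, the source property, and the in-edge property are fine, but they rest on exactly the steps that are missing.
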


\begin{figure}[!ht]
 \centering
 
 \begin{tikzpicture}
 \foreach \x in {(-2,0),(0,2), (2,0)} \draw \x [fill=black] circle (0.06);
 \draw (-2,0)--(0,2)--(2,0);
 \draw [dashed] (0,2)..controls+(-.4,.8)..(0,3)..controls+(.4,-.2)..(0,2);
 \draw [dashed] (0,2)..controls+(-.4,-.8)..(0,1)..controls+(.4,.2)..(0,2);
 \draw [dashed] (-2,0)..controls+(-.75,-.25)..(-2.75,-.75)..controls+(.5,0)..(-2,0);
 \draw [dashed] (2,0)..controls+(.75,-.25)..(2.75,-.75)..controls+(-.5,0)..(2,0);
 \draw (-2.4,-.4) node {2};
 \draw (2.4,-.4) node {4};
 \draw (0,2.65) node {1};
 \draw (0,1.35) node {3};
 \draw (-1.2,1.2) node  {$A$};
 \draw (-.85,.7) node  {$\o{A}$};
 \draw (1.2,1.2) node  {$\o{A}$};
 \draw (.9,.65) node  {${A}$};
 
 \begin{scope}[shift={+(.5,0)}]
 \draw [->,>=stealth, line width=1.5pt] (2.5,1.2)--(4.5,1.2);
  \draw [->,>=stealth,  line width=1.5pt] (4.5,.8)--(2.5,.8);
  \draw (3.5,1.2) node [above] {Type~1};
  \draw (3.5,.8) node [below] {Type~2};
  \end{scope}
  
  \begin{scope}[shift={+(1,0)}]
 \foreach \x in {-.5,1,2.5} \draw [fill=black] (6,\x) circle (0.06);
 \draw (6,-.5)--(6,2.5);
 \draw [dashed] (6,2.5)..controls+(-.4,.8)..(6,3.5)..controls+(.4,-.2)..(6,2.5);
 \draw [dashed] (6,-.5)..controls+(-.4,-.8)..(6,-1.5)..controls+(.4,.2)..(6,-.5);
 \draw [dashed] (6,1)..controls+(-.75,0)..(5,.5)..controls+(.5,-.1)..(6,1);
 \draw [dashed] (6,1)..controls+(.75,0)..(7,.5)..controls+(-.5,-.1)..(6,1);
 \draw (6,3.1) node {1};
 \draw (6,-1.1) node {3};
 \draw (6.6,.7) node {4};
 \draw (5.4,.7) node {2};
 \draw (6,1.79) node [right] {$\o{A}$};
 \draw (6,1.75) node [left] {${A}$};
 \draw (6,.15) node [right] {${A}$};
 \draw (6,.19) node [left] {$\o{A}$};
 \end{scope}
 \end{tikzpicture}
 
 \caption{Local moves between valid trees}
 \label{fig-prelim-decomp}
\end{figure}
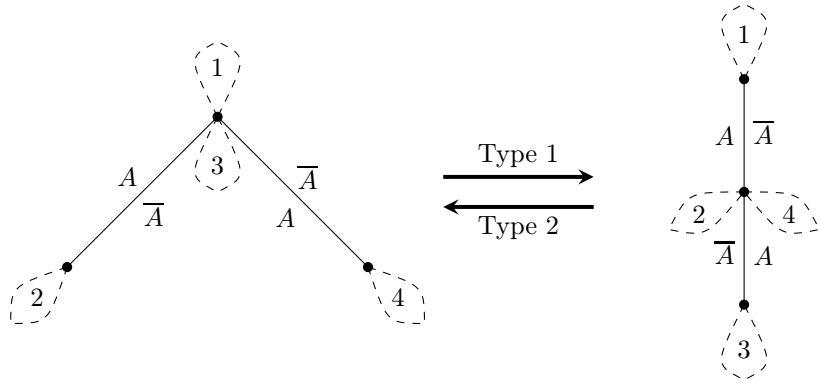

%%%%%%%%%%%%%%%%%%%%%%%%%
\section{Characterization and enumeration of foldable words}\label{sec:1fold}

In this section we give a bijection between foldings of words and edge-colored plane trees. %\todo{RMK: I think it is OK to phrase this less formally here, but the current statement is not correct (there are more foldings of foldable words than there are foldable words). Suggested rephrase: We give a bijection between foldings of words and edge colorings of plane trees. \\ DR: Agreed. Change made.}
Using this bijection we characterize both $1$-foldable and $2$-foldable words. An enumeration of $1$-foldable words is also given.

%%%%%%%%%%%%%%%%%%%%%%%%%
\subsection{Doubled Alphabet}

Fix a foldable word $w=w[1]w[2] \ldots w[2n]$. 
In any folding of $w$, if $w[i]$ bonds with $w[j]$, then $i$ and $j$ must have different parities because the subword $w[i+1] \cdots w[j-1]$ must be foldable and hence has an even (possibly zero) 
number of letters.
This leads to the notion of a \emph{doubled alphabet} to reflect that 
 $\{\text{odd } A_i,\text{ even }\o{A}_i\}$ and $\{\text{even }A_i,\text{ odd }\o{A}_i\}$ are the only possible bonds between an $A_i$ and an $\o{A}_i$. 
For an alphabet $\{A_1,\o{A}_1,\ldots,A_m,\o{A}_m\}$ and a word $w\in \mathcal{S}(n,m)$, define $\hat{w} \in \mathcal{S}(n,2m)$ on the doubled 
alphabet, $\{A_1,\o{A}_1,\ldots,A_{2m},\o{A}_{2m}\}$, 
as follows: 
\begin{itemize}
\item If $w[2\ell] = A_i$, then $\hat{w}[2\ell]=\o{A}_{m+i}$.
\item If $w[2\ell+1] = \o{A}_i$, then $\hat{w}[2\ell+1]=A_{m+i}$. %assuming indices start at 1
\end{itemize} 
%\todo{RMK: We have index $i$ on the left and $n+1$ on the right; we need something in terms of $i$ on the right. I believe it should be If $w[2k] = A_i$, then $\hat{w}[2k]=\o{A}_{m+i}$. If $w[2k+1] = \o{A}_i$, then $\hat{w}[2k]=A_{m+i}$. Please check this and make the change if you agree. \\DR: Good catch. Agreed and fixed. Also, the last 2k needed to be 2k+1. Moreover, I've changed it from k to \ell, since k means something special in this paper.}

\begin{definition}
Fix $n,m\in \mathbb{Z}^+$.
\begin{itemize}
\item $\hS(n,m)$ is the set of words $w \in \cS(n,m)$ for which each letter in an odd-index position is from $\{A_1, A_2, \ldots, A_m\}$, and each letter in even-index position is from $\{\o{A}_1, \o{A}_2, \ldots, \o{A}_m\}$.
\item $\hP(n,m) \coloneqq \hS(n,m) \cap \cP(n,m)$.
\end{itemize}
\end{definition}

\begin{proposition}
For $n,m\in \mathbb{Z}^+$, the map $w \mapsto \hat{w}$ defines two bijections:
\[ \cS(n,m) \longleftrightarrow \hS(n,2m) \]
and
\[ \cP(n,m) \longleftrightarrow \hP(n,2m). \] 
\end{proposition}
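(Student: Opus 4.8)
The plan is to build the inverse of $w\mapsto\hat w$ by hand for the first bijection, and then to show that this transformation alters neither which pairs of positions may be matched nor the non-crossing structure, so that foldability is preserved. To begin I would fill in the two cases the definition leaves implicit: if $w[2\ell]=\o{A}_i$ set $\hat w[2\ell]=\o{A}_i$, and if $w[2\ell+1]=A_i$ set $\hat w[2\ell+1]=A_i$ (a letter already carrying the sign matching its position is left alone). With all four rules in place one checks immediately that every odd-index letter of $\hat w$ is unbarred and every even-index letter is barred, so indeed $\hat w\in\hS(n,2m)$ and the map does land where claimed.

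To see that $\cS(n,m)\longleftrightarrow\hS(n,2m)$ is a bijection I would simply invert the four rules. Given $v\in\hS(n,2m)$, at an odd position a letter $A_j$ with $j\le m$ must have come from $A_j$, while $A_{m+i}$ must have come from $\o{A}_i$; at an even position $\o{A}_j$ with $j\le m$ came from $\o{A}_j$ and $\o{A}_{m+i}$ came from $A_i$. Because the index ranges $\{1,\dots,m\}$ and $\{m+1,\dots,2m\}$ record unambiguously which rule was applied, this recipe is well defined and is a two-sided inverse of $w\mapsto\hat w$.

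The heart of the second bijection is the single claim that, \emph{for every odd index $i$ and even index $j$, the letters $w[i],w[j]$ are complements if and only if $\hat w[i],\hat w[j]$ are complements.} I would verify this by the short case analysis on the two shapes of a complementary pair: $(w[i],w[j])=(A_s,\o{A}_s)$ is fixed by the transformation and stays complementary, while $(w[i],w[j])=(\o{A}_s,A_s)$ is sent to $(A_{m+s},\o{A}_{m+s})$, still complementary; conversely the index bookkeeping shows these are the only odd/even complementary pairs that can arise in $\hat w$.

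Finally I would assemble the conclusion using $\cV$. The two half-edges bounding any edge of a plane tree occupy positions of opposite parity (equivalently, by the parity observation recorded just before the definition of the doubled alphabet, the two matched positions in a folding always differ in parity), so a plane tree $T$ is $w$-valid if and only if each of its edges joins an odd/even complementary pair. By the key fact this condition is identical for $w$ and for $\hat w$, whence $\cV(w)=\cV(\hat w)$; in particular $w$ is foldable exactly when $\hat w$ is. Therefore $w\mapsto\hat w$ sends $\cP(n,m)$ into $\cP(n,2m)$, and since its image already lies in $\hS(n,2m)$ it lands in $\hP(n,2m)=\hS(n,2m)\cap\cP(n,2m)$; the inverse restricts the same way, giving $\cP(n,m)\longleftrightarrow\hP(n,2m)$. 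I expect the only delicate point to be the index bookkeeping of the third paragraph — keeping straight which of the four rules fires at each parity and checking that the shift by $m$ neither creates nor destroys a complement — as everything else is mechanical once the inverse is in hand.
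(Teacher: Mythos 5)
Your proof is correct and complete; the paper actually states this proposition without any proof, and your argument (making the two omitted cases of the definition explicit, writing down the inverse via the index ranges $\{1,\dots,m\}$ versus $\{m+1,\dots,2m\}$, and using the parity observation to show the validity condition is unchanged, so $\cV(w)=\cV(\hat w)$) is exactly the routine verification the authors are leaving to the reader. No issues.
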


%%%%%%%%%%%%%%%%%%%%%%%%%
\subsection{Walks on Regular Trees}

This alternation between letters from $\{A_1, A_2, \ldots, A_m\}$ and letters from $\{\o{A}_1, \o{A}_2, \ldots, \o{A}_m\}$ gives us greater ability to enumerate foldable words. 
To demonstrate, let us view words in $\hS$ as walks on an infinite regular tree. 
The infinite, unrooted, $m$-regular tree $T_m$ has $m$ distinct edges incident to every vertex, so for convenience, we can use the label set $\{A_1, A_2, \ldots, A_m\}$.
Given a walk on $T_m$, write down the sequence of edge labels, but on even-index steps, write down the complement of the labeling letter instead of the letter. 
So from any fixed vertex, walks of length $2n$ are in bijection with the elements of $\hS(n,m)$. 

A walk is \emph{closed} if it ends where it begins. 
Note that a walk is closed precisely when its corresponding word in $\cS$ is foldable. 
That is, closed walks on $T_m$ from a fixed vertex are in bijection with the elements of $\hP(n,m)$. 
These were enumerated by Quenell:
\begin{theorem}[Equation~(19) in \cite{quenell1994}] \label{thm-enum-closed-walks}
    Fix integer $m\geq 2$ and vertex $v$ of $T_m$. 
	The generating function $f_m(x)$ for the number $a_n$ of length-$2n$ closed walks on $T_m$ starting at $v$ is
\begin{eqnarray*}
    f_m(x) = \sum_{n=0}^{\infty} a_nx^n & = & \frac{2(m-1)}{m-2+m\sqrt{1 - 4(m-1)x}} \\
	& = & \frac{2-m\left(1-\sqrt{1 - 4(m-1)x}\right)}{2(1-m^2x)}.
\end{eqnarray*}
\end{theorem}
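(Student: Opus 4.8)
The plan is to count closed walks on $T_m$ by a \emph{first-return decomposition} that exploits the self-similar branching structure of the regular tree. Throughout I use a variable $t$ marking a single step, so that the desired series is recovered via the substitution $x=t^2$; this is legitimate because $T_m$ is bipartite, so every closed walk has even length and only even powers of $t$ occur. I would first introduce an auxiliary series $C(t)$ counting \emph{single-edge first-return excursions}: walks that start at a vertex $u$, step across one prescribed incident edge into the subtree on the far side, wander within that subtree, and return to $u$ for the first time at the final step. The key structural observation is that once the walk steps from $u$ to a neighbor $v$ across the chosen edge, that edge is temporarily forbidden (re-crossing it would end the excursion), so from $v$ the walk sees exactly $m-1$ forward edges, each leading into an isomorphic copy of the same $(m-1)$-branching structure.

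A closed walk at $v$ inside that forward subtree is then a finite sequence of first-return excursions through the $m-1$ forward edges of $v$, which yields the functional equation
\begin{equation*}
  C(t) = t^2\cdot\frac{1}{1-(m-1)C(t)},
\end{equation*}
where $t^2$ records the outgoing and returning steps across $\{u,v\}$ and the geometric factor records the intervening excursions at $v$. Solving the resulting quadratic $(m-1)C^2 - C + t^2 = 0$ and selecting the branch analytic at the origin with $C(0)=0$ gives
\begin{equation*}
  C(t)=\frac{1-\sqrt{1-4(m-1)t^2}}{2(m-1)}.
\end{equation*}
The full count of closed walks at the base vertex $v$, which has all $m$ of its edges available, is likewise a sequence of first-return excursions through these $m$ edges, so $f_m(t^2)=1/(1-mC(t))$. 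Substituting $x=t^2$ and simplifying the denominator to $1-mC = \bigl(m-2+m\sqrt{1-4(m-1)x}\,\bigr)\big/\bigl(2(m-1)\bigr)$ produces the first closed form. Rationalizing it — multiplying numerator and denominator by the conjugate $m-2-m\sqrt{1-4(m-1)x}$ and using the identity $(m-2)^2-m^2(1-4(m-1)x) = -4(m-1)(1-m^2x)$ — yields the second.

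The steps demanding the most care are the combinatorial justifications underlying the functional equation: that every closed walk factors uniquely into first-return excursions; that within a single excursion the walk never revisits the base vertex and hence remains in one forward subtree with branching exactly $m-1$; and that the uniform self-similarity of $T_m$ (every forward subtree, with its back-edge deleted, is isomorphic, so a single series $C$ suffices) is what makes the equation closed. The remaining analytic points — choosing the square-root branch so that $C$ is a genuine power series with $C(0)=0$, and checking that the geometric series converge as formal power series (each has vanishing constant term in $t$) — are routine once the decomposition is set up. As an alternative I would note that $f_m(t^2)$ equals the diagonal Green's function $\langle\delta_v,(I-tA)^{-1}\delta_v\rangle$ of the adjacency operator $A$ of $T_m$, from whose recursive structure on the tree the same quadratic arises.
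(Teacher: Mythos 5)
Your derivation is correct, and it is worth noting that the paper itself offers no proof of this statement: it is imported verbatim as Equation~(19) of the cited work of Quenell, so there is nothing internal to compare against. Your first-return decomposition is the classical self-contained route to this formula. The one structural fact that makes the whole argument work on a tree --- that a walk entering the component beyond an edge $\{u,v\}$ can only return to $u$ through that same edge, so ``first-return excursion through a prescribed edge'' is a well-defined notion and every closed walk factors uniquely into such excursions --- is exactly the point you flag as needing care, and it holds here precisely because $T_m$ is a tree. The algebra checks out: $(m-1)C^2-C+t^2=0$ with the branch $C(0)=0$ gives $C=\bigl(1-\sqrt{1-4(m-1)t^2}\bigr)/\bigl(2(m-1)\bigr)$, and $1-mC=\bigl(m-2+m\sqrt{1-4(m-1)x}\bigr)/\bigl(2(m-1)\bigr)$ yields the first closed form, while the conjugate identity $(m-2)^2-m^2(1-4(m-1)x)=-4(m-1)(1-m^2x)$ gives the second. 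The formal-power-series issues are handled correctly since $C$ has vanishing constant term, and the bipartiteness of $T_m$ justifies the substitution $x=t^2$. Your alternative remark about the diagonal Green's function $\langle\delta_v,(I-tA)^{-1}\delta_v\rangle$ is in fact closer in spirit to Quenell's original derivation, which obtains the same quadratic from the resolvent recursion on the tree; the combinatorial and spectral routes produce the identical functional equation, with yours having the advantage of being elementary and bijective.
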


%Check out https://en.wikipedia.org/wiki/Catalan_number#First_proof

\begin{corollary}
	For integers $m\geq 2$ and $n \geq 1$ and vertex $v$ of $T_m$, the number $a_n$ of length-$2n$ closed walks on $T_m$ starting at $v$ is
\begin{eqnarray*}
	a_n & = & m^{2n} - \sum_{i=1}^n \frac{m^{(1 + 2(n-i))}(m-1)^{i}}{4i-2}\binom{2i}{i}. % \\
%	& = & m^{2n} \left( 1 - \sum_{i=1}^n \frac{m}{4i-2}\left(\frac{m-1}{m^2}\right)^i\binom{2i}{i} \right).
\end{eqnarray*}
%\todo{DR: Neither form of the explicit formula for $a_n$ is at all enlightening for the asymptotic growth, so we can just pick whichever looks nicer. \\MP: I would say only list the second, it's what you have at the end of the proof. \\DR: You mean the first?}
\end{corollary}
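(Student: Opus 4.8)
The plan is to read off the coefficient $a_n = [x^n] f_m(x)$ directly from the second (rational-plus-radical) expression for $f_m(x)$ in Theorem~\ref{thm-enum-closed-walks}. First I would rewrite the numerator by distributing the $-m$, so that
\[
f_m(x) = \frac{(2-m) + m\sqrt{1-4(m-1)x}}{2(1-m^2x)},
\]
and then treat the geometric factor $\frac{1}{1-m^2x}$ and the numerator separately as power series before forming their Cauchy product.

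The key ingredient is the generalized binomial expansion of the square root. Setting $y = (m-1)x$ and using the identity $\binom{1/2}{i}(-4)^i = -\frac{1}{2i-1}\binom{2i}{i}$, valid for all $i\ge 0$, I would record
\[
\sqrt{1-4(m-1)x} = 1 - \sum_{i=1}^\infty \frac{1}{2i-1}\binom{2i}{i}(m-1)^i x^i.
\]
Substituting this into the numerator, the constant contributions $(2-m)$ and $m\cdot 1$ combine to give $2$, so that the numerator divided by $2$ simplifies to
\[
1 - m\sum_{i=1}^\infty \frac{1}{4i-2}\binom{2i}{i}(m-1)^i x^i,
\]
where the overall factor $\tfrac12$ is absorbed into $2i-1$ to produce the denominator $4i-2$ appearing in the statement.

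Finally I would expand $\frac{1}{1-m^2x} = \sum_{j\ge 0} m^{2j}x^j$ and multiply out. The leading $1$ contributes $m^{2n}$ to $[x^n]$, while the infinite sum contributes $-m\sum_{i=1}^n \frac{1}{4i-2}\binom{2i}{i}(m-1)^i m^{2(n-i)}$; collecting the powers of $m$ then yields exactly the claimed closed form. I do not expect a genuine obstacle here, as this is a routine coefficient extraction. The only point demanding care is the bookkeeping of the constant $\frac{m}{2}$ together with the index shift, so that the resulting denominator reads $4i-2$ rather than $2i-1$ and the exponent on $m$ comes out as $1 + 2(n-i)$.
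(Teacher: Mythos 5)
Your proposal is correct and follows essentially the same route as the paper: expand $\sqrt{1-4(m-1)x}$ via Newton's generalized binomial theorem (the identity $\binom{1/2}{i}(-4)^i=-\frac{1}{2i-1}\binom{2i}{i}$ is exactly what the paper derives), fold the factor $\tfrac12$ into the denominator to get $4i-2$, and take the Cauchy product with $\sum_j m^{2j}x^j$. The only cosmetic difference is that you distribute the $-m$ in the numerator before expanding, whereas the paper keeps the form $2-m(1-\sqrt{\cdot})$; the coefficient extraction is identical.
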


\begin{proof}

By Newton's generalized binomial theorem, 
\begin{eqnarray*}
	\sqrt{1 - 4(m-1)x} & = & \sum_{n=0}^\infty \binom{1/2}{n}(-4(m-1)x)^n  \\
	& = & \sum_{n=0}^\infty \frac{ \prod_{i = 0}^{n-1} \left(\frac{1}{2} - i \right) }{n!} (-4(m-1)x)^n \\
%	& = & \sum_{n=0}^\infty \frac{\left(\frac{1}{2}\right)\left(\frac{-1}{2}\right)\cdots\left(\frac{-(2n-1)}{2}\right)}{n!} (-4(m-1)x)^n \\
%	& = & \sum_{n=0}^\infty \frac{(-1)^{n+1}}{4^n(2n-1)}\binom{2n}{n}(-4(m-1)x)^n\\ 
	 & = & \sum_{n=0}^\infty \frac{-(m-1)^{n}}{2n-1}\binom{2n}{n} x^n.
\end{eqnarray*}

Substituting this back into $f_m(x)$, we get
\begin{eqnarray*}
	f_m(x) & = & \frac{2-m\left(\sum_{n=1}^\infty \frac{(m-1)^{n}}{2n-1}\binom{2n}{n} x^n \right)}{2(1-m^2x)}. %\\
%	& = & \frac{1 - \frac{m}{2}\sum_{n=1}^\infty \frac{(m-1)^{n}}{2n-1}\binom{2n}{n} x^n}{1 - m^2x}.
\end{eqnarray*}

Setting $b_i = \frac{m(m-1)^{i}}{4i-2}\binom{2i}{i}$ for $i \geq 1$, we have
\begin{eqnarray*}
	f_m(x) & = & \frac{1 - \sum_{n=1}^\infty b_n x^n}{1 - m^2x} \\
	& = & 1 + (m^2 - b_1)x + (m^2(m^2 - b_1) - b_2)x^2 +  \cdots \\
	& = & \sum_{n=0}^\infty \left(m^{2n} - \sum_{i=1}^n b_im^{2(n-i)}\right)x^n.
\end{eqnarray*}
\end{proof}

We can obtain asymptotics for $a_n$ using the Maple\texttrademark{} package \textbf{algolib} (version 17.0), or the saddle point method, on the generating function. 
%http://math.sun.ac.za/~swagner/SummerSchool5_Saddle.pdf
%Section 3.7.1 of Combinatorial Methods in Discrete Mathematics, Vladimir N. Sachkov
%Chapter 22 of Handbook of Combinatorics, Eds. Graham, Grotschel, Lovasz

\begin{corollary} \label{cor-closed-walk-asymp}
	For fixed $m\geq 3$ and vertex $v$ of $T_m$, the number $a_n$ of length-$2n$ closed walks on $T_m$ starting at $v$ is asymptotically
	\[
	a_n = \frac{(4m-4)^n}{n^{3/2}} \left(\frac{m(m-1)}{\sqrt{\pi}(m-2)^2}+ O\!\left(\frac{1}{\sqrt{n}}\right) \right) . 
	\]
\end{corollary}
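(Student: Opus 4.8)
The plan is to extract the asymptotics of $a_n = [x^n] f_m(x)$ from the explicit generating function of Theorem~\ref{thm-enum-closed-walks} by singularity analysis (this is essentially what the Maple routine automates, but I would prefer to carry it out by hand for transparency). The first step is to locate the dominant singularity. The only branch point of $\sqrt{1-4(m-1)x}$ sits at $\rho = \frac{1}{4(m-1)}$, and $\rho^{-1} = 4m-4$ already matches the claimed exponential growth rate. The delicate point is that the second form of $f_m(x)$ has an apparent pole at $x = 1/m^2$, and since $1/m^2 < \rho$ for $m \geq 3$, I must rule this out: otherwise it would dominate and force growth of order $m^{2n}$. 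I would do this by evaluating the numerator $2 - m\bigl(1 - \sqrt{1-4(m-1)x}\bigr)$ at $x = 1/m^2$, where $\sqrt{1-4(m-1)/m^2} = (m-2)/m$, so the numerator equals $2 - m\cdot(2/m) = 0$ and the singularity is removable. Equivalently, in the first form $\frac{2(m-1)}{m-2+m\sqrt{1-4(m-1)x}}$ the denominator stays bounded away from $0$ for $m > 2$, so $\rho$ is the unique dominant singularity.

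Next I would verify the analytic hypotheses of the transfer theorem: that $f_m$ extends analytically to a $\Delta$-domain (a slit disk of radius slightly exceeding $\rho$ with a small sector around $\rho$ deleted). This holds because $\sqrt{1-4(m-1)x}$ is analytic off the ray $[\rho,\infty)$ and, by the computation above, the denominator $m-2+m\sqrt{1-4(m-1)x}$ is nonvanishing there for $m \geq 3$.

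Then I would compute the singular expansion at $x = \rho$. Writing $u = \sqrt{1 - x/\rho} = \sqrt{1-4(m-1)x}$ and expanding the first form as $u \to 0$ gives
\begin{equation*}
 f_m(x) = \frac{2(m-1)}{m-2+mu} = \frac{2(m-1)}{m-2} - \frac{2m(m-1)}{(m-2)^2}\,(1-x/\rho)^{1/2} + O(1-x/\rho).
\end{equation*}
The constant term contributes nothing to $[x^n]$ for $n \geq 1$ and the integer-power remainder is analytic, so the asymptotics are governed by the half-integer term. Applying the standard transfer theorem, $[x^n](1-x/\rho)^{1/2} \sim \frac{\rho^{-n} n^{-3/2}}{\Gamma(-1/2)}$ with $\Gamma(-1/2) = -2\sqrt{\pi}$, whence
\begin{equation*}
 a_n \sim -\frac{2m(m-1)}{(m-2)^2}\cdot \frac{(4m-4)^n\, n^{-3/2}}{-2\sqrt{\pi}} = \frac{m(m-1)}{\sqrt{\pi}(m-2)^2}\cdot \frac{(4m-4)^n}{n^{3/2}},
\end{equation*}
which is the leading term claimed; the next singular term is of order $(1-x/\rho)^{3/2}$, whose coefficients are of smaller order and are absorbed into the stated $O(1/\sqrt{n})$ error.

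The main obstacle is not the expansion, which is routine, but the verification that $\rho$ (rather than $x = 1/m^2$) is the dominant singularity — that is, confirming the cancellation that makes the apparent pole removable — together with checking $\Delta$-analyticity so that singularity analysis, rather than a bare-hands coefficient bound, is legitimate. This is precisely where $m \geq 3$ is used: at $m = 2$ the branch point and the pole collide at $x = 1/4$ and the factor $(m-2)^{-2}$ blows up, so the case $m = 2$ (which recovers the Catalan asymptotics $C_n \sim 4^n/(\sqrt{\pi}\,n^{3/2})$) must be excluded.
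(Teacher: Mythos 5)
Your singularity analysis is correct and complete, and it is worth noting that it does more than the paper does: the paper offers no written proof of this corollary at all, merely remarking that the asymptotics ``can be obtained'' from the generating function via the Maple package \textbf{algolib} or the saddle point method. Your route --- locate the branch point $\rho = 1/(4(m-1))$, verify that the apparent pole at $x=1/m^2$ is removable (the cancellation $2 - m(1-(m-2)/m)=0$ is exactly right, and is the step most easily overlooked), check nonvanishing of the denominator $m-2+m\sqrt{1-4(m-1)x}$ on a $\Delta$-domain, expand in $u=\sqrt{1-x/\rho}$, and transfer the $(1-x/\rho)^{1/2}$ term with $\Gamma(-1/2)=-2\sqrt{\pi}$ --- is the standard Flajolet--Odlyzko argument and yields precisely the stated constant $m(m-1)/(\sqrt{\pi}(m-2)^2)$ and growth rate $(4m-4)^n$. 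Your error analysis is also fine: the even powers of $u$ are polynomial in $x$, and the next odd power contributes relative error $O(1/n)$, comfortably within the stated $O(1/\sqrt{n})$. What your approach buys is a self-contained, checkable proof and a transparent explanation of where $m\geq 3$ enters (the pole and the branch point coincide at $m=2$, and the constant degenerates); what the paper's approach buys is only brevity.

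One minor slip in your closing aside, which does not affect the result: at $m=2$ the generating function is $f_2(x)=1/\sqrt{1-4x}$, so $a_n=\binom{2n}{n}\sim 4^n/\sqrt{\pi n}$, with exponent $-1/2$ rather than $-3/2$; it is the central binomial coefficients, not the Catalan numbers, that are recovered. This is another reason the $n^{-3/2}$ form of the statement genuinely requires $m\geq 3$.
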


	Recall that $\cP(n,m)$ is in bijection with $\hP(n,2m)$, which can be enumerated by closed walks on $T_{2m}$.
	Thus, using Corollary~\ref{cor-closed-walk-asymp} with $2m$, we have that for fixed $m\geq 2$ the number of foldable words of length $2n$ as $n$ approaches infinity is asymptotically
	\begin{equation} \label{eqn-foldable-asymp}
		|\cP(n,m)| = \Theta(n^{-3/2}(8m-4)^n) .
	\end{equation}
%\end{proof}

%%%%%%%%%%%%%%%%%%%%%%%%%
\subsection{Labeling Plane Trees}

There is a natural bijection between foldings of words and (not necessarily proper) edge-colorings of rooted plane trees which is most clearly seen by examining the foldings of $\hat{w}$ rather than $w$. 
More generally, we consider words in $\hP$---that is, with alternating unbarred and barred letters---rather than in $\cP$. 
Set 
\[\cT(n,m) \coloneqq \{(w,T)\st w\in\hP(n,m),\;T\in \cV(w)\},\]
where the element $(w,T)$ is viewed as a folding of $w$ around $T$.
%\todo{RMK: How about ``viewed as a folding of $w$ around $T$''? \& ``Observe that $\cT(n,2m)$ corresponds bijectively to $\{(w,T)\st w\in P(n,m),\;T\in \cV(w)\}$''? LDK: changed the first bit, not sure where you intend the second part to go? RMK: Maybe right before the theorem.}
With $[m]$ denoting the set $\{1,2,\ldots,m-1,m\}$, define 
\[\cC(n,m) \coloneqq \{(c,T)\st T\text{ is a plane tree with }n\text{ edges and } c:E(T) \to [m]\},\]
so that the elements of $\mathcal{C}(n,m)$ represent edge-colored plane trees, where the coloring is not necessarily proper. 
%\todo[inline]{RMK: How about ``Observe that $\cT(n,2m)$ corresponds bijectively to $\{(w,T)\st w\in P(n,m),\;T\in \cV(w)\}$''? \\LDK: ... not sure where you intend it to go? \\RMK: Maybe right before the theorem. \\DR: We say something like that at the end of the first paragraph of subsection 3.4 below; I think here it might only potentially confusion the Theorem statement which doesn't deal with $2m$.\\WG: I don't think it is necessary to add the sentence that Rachel mentioned, since the bijections proved in Prop. 3.2 are enough.\\RMK: I'm OK with not mentioning it here.}

\begin{theorem} \label{thm-Tnm-Cnm}
    For all integers $n\geq 0$ and $m\geq 1$, $|\cT(n,m)| = |\cC(n,m)|$.
\end{theorem}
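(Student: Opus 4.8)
The plan is to exhibit an explicit bijection $\Phi\colon \cT(n,m)\to\cC(n,m)$ together with its inverse, based on a single structural observation: when $w\in\hP(n,m)$ and $T\in\cV(w)$, the two half edges of any given edge of $T$ carry complementary letters $A_i$ and $\o{A}_i$ sharing a single index $i\in[m]$. Recording that index edge by edge produces a coloring, and conversely a coloring together with the combinatorial structure of the tree tells us exactly which letter to place on each half edge.

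First I would establish the lemma that drives everything: in a plane tree, for each edge $e$ the two half edges of $e$ occupy positions of opposite parity in the counterclockwise half-edge order. To see this, fix an edge $e$ joining a vertex to its parent and let $s$ be the number of edges in the subtree hanging below $e$. Tracing the perimeter counterclockwise, once we traverse the first side of $e$ we descend into that subtree, visit each of its $s$ edges exactly twice, and only then traverse the second side of $e$; hence exactly $2s$ half edges lie strictly between the two sides of $e$, so the two positions differ by $2s+1$ and have opposite parity.

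With the lemma in hand I would define $\Phi$ as follows. Given $(w,T)\in\cT(n,m)$, each edge $e$ of $T$ is labeled by the letters in two positions of opposite parity; because $w\in\hP(n,m)$ the odd position carries an unbarred letter $A_i$ and the even position carries a barred letter $\o{A}_j$, and because $T$ is $w$-valid these are complements, forcing $i=j$. Setting $c(e)=i$ yields a well-defined coloring $c\colon E(T)\to[m]$, and $\Phi(w,T)\coloneqq(c,T)\in\cC(n,m)$. For the inverse $\Psi$, start from $(c,T)\in\cC(n,m)$ and construct $w$ by labeling, for each edge $e$, its odd half edge with $A_{c(e)}$ and its even half edge with $\o{A}_{c(e)}$ (using the lemma to know which half edge is which). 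Reading the half edges in order gives a word $w$ whose odd positions are all unbarred and whose even positions are all barred, so $w\in\hS(n,m)$; since the two half edges of every edge are the complements $A_{c(e)},\o{A}_{c(e)}$, the tree $T$ is $w$-valid, whence $w$ is foldable and $(w,T)\in\cT(n,m)$.

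Finally I would check that $\Phi$ and $\Psi$ are mutually inverse, which is immediate from the constructions: $\Psi$ recovers the letter on each half edge from its color and its parity, while $\Phi$ recovers the color from the common index of the two complementary letters. I expect the only real content to be the parity lemma; once it is proved, the matching of half edges to letters is forced and the bijection together with its inverse is routine, giving $|\cT(n,m)|=|\cC(n,m)|$.
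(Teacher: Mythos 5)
Your proposal is correct and matches the paper's own argument: the same edge-coloring map $c(e)=i$ and the same inverse, justified by the same parity observation that the subtree below an edge contributes an even number of half edges, so each edge has one odd and one even half edge. You simply spell out the parity lemma in slightly more detail than the paper does.
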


\begin{proof}
We will define a bijection from $\cT(n,m)$ to $\cC(n,m)$. 
Fix an arbitrary $(w,T)\in \cT(n,m)$. 
Define the edge-coloring $c:E(T) \to [m]$ so that $c(e)=i$ if the half edges of $e\in E(T)$ are labeled $(A_i,\o{A}_i)$ or $(\o{A}_i,A_i)$. Figure~\ref{fig-fold-color-ex} gives an example of this mapping.

\begin{figure}[!ht]
 \centering
 
 \begin{tikzpicture}[scale=1.2]
  \foreach \x in {(0,3),(-1,2),(1,2), (-2,1),(0,1),(-1,0),(1,0)} \draw \x [fill=black] circle (0.06);
  \draw (0,3) node[above] {\textit{\small{root}}};
  \draw (1,0)--(-1,2)--(0,3)--(1,2) (-2,1)--(-1,2) (-1,0)--(0,1);
  \draw (-.25,2.3) node {$\o{A}_1$};
  \draw (-.75,2.6) node {$A_1$};
  \draw (-1.25,1.3) node {${A}_3$};
  \draw (-1.75,1.6) node {$\o{A}_3$};
  \draw (-.25,.3) node {$\o{A}_1$};
  \draw (-.75,.6) node {$A_1$};
  \draw (.4,2.3) node {${A}_2$};
  \draw (.75,2.6) node {$\o{A}_2$};
  \draw (-.6,1.3) node {$\o{A}_3$};
  \draw (-.25,1.6) node {${A}_3$};
  \draw (.4,.3) node {${A}_3$};
  \draw (.75,.6) node {$\o{A}_3$};
 \end{tikzpicture}
 \hspace{.5in}
 \begin{tikzpicture}[scale=1.2]
  \foreach \x in {(0,3),(-1,2),(1,2), (-2,1),(0,1),(-1,0),(1,0)} \draw \x [fill=black] circle (0.06);
  \draw (0,3) node[above] {\textit{\small{root}}};
  \draw (1,0)--(-1,2)--(0,3)--(1,2) (-2,1)--(-1,2) (-1,0)--(0,1);
  \draw (-.65,2.6) node {$1$};
  \draw (-1.65,1.6) node {$3$};
  \draw (-.65,.6) node {$1$};
  \draw (.65,2.6) node {$2$};
  \draw (-.35,1.6) node {$3$};
  \draw (.65,.6) node {$3$};
 \end{tikzpicture}

 \caption{A folding of $d(w) = A_1\o{A}_3A_3\o{A}_3A_1\o{A}_1A_3\o{A}_3A_3\o{A}_1A_2\o{A}_2$ and the corresponding edge-coloring of the tree.}
 \label{fig-fold-color-ex}
\end{figure}
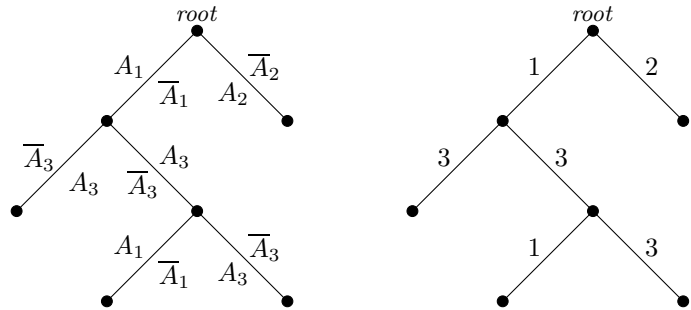
%\todo{RMK: In the figure, I changed the edge colors to match the letter labels (I think we just had 2 and 3 swapped everywhere).\\ DR: Good catch.}

%First note that this mapping is an injection because of the alternating pattern in $w$ between letters without bars and letters with bars.
%Next we verify that the mapping is a surjection. 
The inverse function is defined as follows.
%\todo{RMK: We are describing the inverse function (for both injectivity and surjectivity). How about ``The inverse function is defined as follows:'' instead of the 2 sentences before this comment? LDK: Agreed. Changed.}
Fix $(c,T)\in \mathcal{C}(n,m)$. 
The color of each edge indicates the two letters that will be assigned to its half edges. 
It only remains to determine which letter will be assigned to which half edge. 
In the ordering of the half edges of $T$, each edge will have an even half edge and an odd half edge. 
This is because the subtree below the edge contains an even number of half edges. 
Assign the letter with the bar to the even half edge and the other to the odd half edge. This labeling is precisely the folding of $w$ on $T$. Since an inverse exists, the function defined is an injection.
\end{proof}

%%%%%%%%%%%%%%%%%%%%%%%%%
\subsection{1-foldable classification and enumeration}

The correspondence between foldings of words and edge-colored plane trees leads to an enumeration of words which are $1$-foldable.  
Denote with $\cT_k(n,m)$ the set of all $(w,T) \in \cT(n,m)$ where $w$ is $k$-foldable. 
Then $|\cT_k(n,2m)| = k \cdot |\cS_k(n,m)|$ for any $k \in \mathbb{Z}^+$ and, in particular, $|\cT_1(n,2m)| = |\cS_1(n,m)|$. 

\begin{theorem} 
 The words in $\cS_1(n,m)$ are in bijection with the 
proper $2m$-edge-colorings of  plane trees with $n$ edges.
\label{thm-1fold-bij}
\end{theorem}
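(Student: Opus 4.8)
The plan is to leverage the two facts already in hand: the identity $|\cT_1(n,2m)| = |\cS_1(n,m)|$ and the bijection $\cT(n,2m) \leftrightarrow \cC(n,2m)$ of Theorem~\ref{thm-Tnm-Cnm}, which sends a folding $(w,T)$ to the edge-colored tree $(c,T)$ where $c(e)=i$ exactly when the half edges of $e$ are labeled $A_i,\o{A}_i$. Since each $1$-foldable $w\in\hP(n,2m)$ contributes the single pair $(w,T)$ with $\cV(w)=\{T\}$, it suffices to show that the Theorem~\ref{thm-Tnm-Cnm} bijection carries $\cT_1(n,2m)$ onto precisely the proper $2m$-edge-colorings of plane trees with $n$ edges. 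Equivalently, I would prove that for $(w,T)\in\cT(n,2m)$ with associated coloring $c$, the word $w$ is $1$-foldable if and only if $c$ is proper, meaning no two edges sharing a vertex receive the same color.

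The crux is the following local statement: $T$ has a neighbor in $G_w$ if and only if $T$ has two edges of equal color sharing a vertex. One direction is immediate from Figure~\ref{fig-prelim-decomp}, since each Type~1/Type~2 move re-pairs two edges of the same color meeting at a vertex. For the converse I would pass to the non-crossing matching picture, where the edges of $T$ become color-labeled arcs on the half-edge positions $1,\dots,2n$ and, since $w\in\hP$, each arc joins an odd position (carrying some $A_i$) to an even position (carrying $\o{A}_i$). Two equally colored edges meeting at a vertex $v$ form either two child edges of $v$, whose arcs lie side by side, or the parent edge and a child edge of $v$, whose arcs are nested. In each case the two arcs can be re-paired in the unique other non-crossing way; a short parity count---using that the half edges lying strictly between the relevant endpoints occur in complete subtrees and are therefore even in number---shows the new arcs again join an $A_i$ to an $\o{A}_i$. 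This produces a valid tree $T'\neq T$, hence a move at $T$.

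With the local statement established, I would finish using Theorem~\ref{thm-graph}. If $c$ is proper, then $T$ admits no move, so $T$ is isolated in $G_w$; but every vertex of $G_w$ is joined to the greedy source by a directed path, so an isolated vertex forces $\cV(w)=\{T\}$ and $w$ is $1$-foldable. If $c$ is not proper, the local statement yields a neighbor $T'\neq T$ of $T$ in $\cV(w)$, so $|\cV(w)|\ge 2$ and $w$ is not $1$-foldable. Hence the bijection of Theorem~\ref{thm-Tnm-Cnm} restricts to a bijection between $\cT_1(n,2m)$ and the proper $2m$-edge-colored plane trees with $n$ edges, which composed with $|\cS_1(n,m)|=|\cT_1(n,2m)|$ gives the claim.

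I expect the main obstacle to be the converse half of the local statement: verifying in full generality---for two non-consecutive children as well as for the parent-child case---that the re-paired matching still lies in $\hP$, that is, still matches each $A_i$ to an $\o{A}_i$, and remains non-crossing. The parity bookkeeping is routine but must be organized so that it simultaneously covers every adjacency type, and one must confirm that the re-pairing yields a genuinely different plane tree rather than reproducing $T$.
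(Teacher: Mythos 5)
Your proposal follows essentially the same route as the paper: both rest on the observation that, under the Theorem~\ref{thm-Tnm-Cnm} bijection, a local move is available exactly when two incident edges share a color, combined with the connectivity of $G_w$ from Theorem~\ref{thm-graph}. The only difference is that you spell out (and flag as the main technical work) the parity argument behind the ``move exists iff two incident edges share a color'' equivalence, which the paper simply asserts; your outline of that verification is sound.
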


\begin{proof}

First recall that the graph of valid trees for a given word is connected (Theorem~\ref{thm-graph}). 
The bijection in Theorem~\ref{thm-Tnm-Cnm} can be used to detect available local moves from the edge-coloring. 
In particular, a move exists precisely when two incident edges have the same color. 
Therefore, elements of $\cC(n,m)$ with a proper edge-coloring correspond exactly with elements of $\cT_1(n,m)$. 
Hence, elements of $\cC(n,2m)$ with a proper edge-coloring are in correspondence with elements of $\cS_1(n,m)$. %properly edge-colored plane trees correspond exactly with elements of $\cT_1$. %, and thus with $\cS_1$. 
%Specifically, pairs $(c,T)$ in $\cC(n,2m)$ with $c$ a proper coloring correspond to pairs $(w,T)$ in $\cT(n,2m)$ with $w$ $1$-foldable, which correspond to $1$-foldable $w$ in $\hP(n,2m)$, which correspond to words in $\cS_1(n,m)$. 
\end{proof}

Using this classification, we now enumerate $1$-foldable words. 

\begin{lemma} \label{lma-fold-colors}
 Let $T$ be a plane tree with degree multiset $\{1^{\alpha_1},2^{\alpha_2},\ldots\}$ where there are $\alpha_i$ vertices with degree $i$. 
 Then the number of proper $k$-edge-colorings of $T$ is 
  \begin{equation} \label{eqn-fold-colorings}
   k\prod_{i=1}^{\infty}\left( \binom{k-1}{i-1}(i-1)!\right)^{\alpha_i},
  \end{equation}
  where $0^0$ is understood to equal 1.
  \end{lemma}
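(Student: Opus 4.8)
The plan is to count proper $k$-edge-colorings of a fixed plane tree $T$ by choosing colors vertex-by-vertex, exploiting the fact that a tree has no cycles so the coloring conditions decouple along a root-down traversal. A proper edge-coloring is one in which no two edges sharing a vertex receive the same color; the quantity we want depends only on the degree multiset $\{1^{\alpha_1}, 2^{\alpha_2}, \ldots\}$, which is the content of the claimed product formula. First I would root the tree (it is already a rooted plane tree) and process the edges in a top-down order, say by breadth-first or depth-first search from the root, coloring each edge at the moment we first reach it. The key observation is that when we color the edge $e$ leading down to a vertex $v$ from its parent, the only previously-colored edges that constrain $e$ are those incident to $v$'s parent; the edges below $v$ have not yet been assigned.

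The main combinatorial step is to attribute the color choices to vertices rather than edges. Consider a non-root vertex $v$ of degree $i$: it has one parent edge and $i-1$ child edges. When we are about to color the $i-1$ child edges of $v$, the parent edge of $v$ is already colored, so each child edge must avoid that one color and all previously-chosen child-edge colors of $v$. Thus the first child edge has $k-1$ choices, the next $k-2$, and so on down to $k-(i-1)$, giving $(k-1)(k-2)\cdots(k-i+1) = \binom{k-1}{i-1}(i-1)!$ ways. For the root, which has degree $i$ (say $\alpha_i$ counts it with its true degree $i$ and there is no parent edge), its $i$ incident edges must all be distinctly colored but face no external constraint; the cleanest bookkeeping is to factor out the root's contribution as the leading $k$ together with the remaining $\binom{k-1}{i-1}(i-1)!$ so that its total $k(k-1)\cdots(k-i+1)$ matches the pattern and is absorbed into the product. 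I would state carefully how the root is handled so that the single prefactor $k$ in Equation~\eqref{eqn-fold-colorings} accounts for the one extra degree of freedom the root enjoys relative to an internal vertex of the same degree.

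The formula then assembles multiplicatively: each vertex $v$ of degree $i$ contributes a factor $\binom{k-1}{i-1}(i-1)!$ to the count (with the root contributing an additional factor of $k$), and because the tree is acyclic these factors are genuinely independent across vertices, so the total product is $k \prod_i \left(\binom{k-1}{i-1}(i-1)!\right)^{\alpha_i}$. I would verify the convention $0^0 = 1$ is needed precisely for the leaves: a degree-$1$ vertex has $i=1$, giving $\binom{k-1}{0}(0)! = 1$, so leaves contribute trivially, which is correct since a leaf edge is already accounted for as a child edge of its parent.

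The step I expect to be the main obstacle is making the independence argument airtight, that is, justifying rigorously that processing edges in a rooted traversal order means each child edge of a vertex $v$ is constrained only by the parent edge of $v$ and by the sibling edges already colored, and never by any edge elsewhere in the tree. This hinges on the fact that any two edges sharing a vertex are precisely a parent edge and one of its child edges, or two sibling child edges of a common vertex — there are no other incidences in a tree — so the set of constraints partitions cleanly by vertex. Once this partition of the proper-coloring constraints by vertex is established, the counting is a routine product of falling factorials and the formula follows.
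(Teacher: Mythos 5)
Your proposal is correct and takes essentially the same approach as the paper: a greedy traversal of the tree in which each vertex of degree $i$, having exactly one incident edge already colored, contributes a falling-factorial factor $(k-1)(k-2)\cdots(k-i+1)=\binom{k-1}{i-1}(i-1)!$. The only cosmetic difference is that you launch the traversal from the root and attribute the extra factor of $k$ there, whereas the paper starts by coloring a leaf edge in $k$ ways and then proceeds by breadth-first search.
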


\begin{proof}
Fix a leaf $v$. Let $u$ be the unique neighbor of $v$ and let $\deg(u)$ be the degree of $u$. 
Color the edge $\{v,u\}$ with one of $k$ colors. 
Next, color the remaining $\deg(u)-1$ incident edges, which can be ordered in $(\deg(u)-1)!$ ways. 
Visiting the vertices via a breadth-first search, each vertex will contribute a similar factor to the product as all but one of its incident edges will already be colored. 
\end{proof}

Let $\Delta(T)$ be the maximum degree in $T$. 
Note that if $\Delta(T)>k$, expression~\eqref{eqn-fold-colorings} collapses to zero as expected since $\Delta(T)$ colors are required for a proper coloring of the edges incident to a maximum-degree vertex. 

\begin{lemma}[Mallows and Wacher~\cite{mallowswacher}] \label{lma-fold-treenum}
Let $RPT(\alpha_1,\alpha_2,\ldots)=RPT(\mathbf{\alpha})$ be the number of 
 plane trees with degree multiset $\{1^{\alpha_1},2^{\alpha_2},\ldots\}$. 
Then
 \begin{equation}
  RPT(\mathbf{\alpha}) = \frac{2}{\alpha_1}\binom{1+\alpha_2+2\alpha_3+3\alpha_4+\cdots}{\alpha_1-1,\alpha_2,\alpha_3,\ldots}.
 \end{equation}
\end{lemma}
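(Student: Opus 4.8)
The plan is to reduce the enumeration to a cycle-lemma count on degree sequences. First I would record that, writing $N=\sum_i\alpha_i$ for the number of vertices (so the tree has $n=N-1$ edges), the claimed quantity simplifies: since
\[ \frac{2}{\alpha_1}\binom{N-1}{\alpha_1-1,\alpha_2,\ldots}=\frac{2(N-1)!}{\alpha_1!\,\alpha_2!\cdots}=\frac{2}{N}\binom{N}{\alpha_1,\alpha_2,\ldots}, \]
it suffices to prove $RPT(\mathbf{\alpha})=\tfrac{2}{N}\binom{N}{\alpha_1,\alpha_2,\ldots}$. The two forms agree precisely because a tree degree sequence satisfies $\sum_i(i-2)\alpha_i=-2$, equivalently $N-1=1+\alpha_2+2\alpha_3+\cdots$; if the multiset fails this identity then no plane tree realizes it and both expressions vanish, so I would assume it from here on.

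Next I would encode each plane tree by the sequence $(\delta_1,\ldots,\delta_N)$ of vertex degrees read in preorder, i.e.\ in the order the vertices are first met while tracing the perimeter. Since position $1$ is always the root, the rule $d_1=\delta_1$ and $d_i=\delta_i-1$ for $i\ge 2$ recovers the preorder out-degree word, which reconstructs the tree; hence this encoding is injective and I only need to identify its image. The out-degree word of a plane tree is exactly a {\L}ukasiewicz word, characterized by $\sum_{i\le k}d_i\ge k$ for $1\le k\le N-1$ together with $\sum_i d_i=N-1$. Substituting back, a sequence of positive integers $(\delta_i)$ with multiset $\{1^{\alpha_1},2^{\alpha_2},\ldots\}$ (which forces $\sum_i\delta_i=2N-2$) is a preorder degree sequence of a plane tree if and only if
\[ \sum_{i\le k}\delta_i\ge 2k-1\qquad\text{for all } 1\le k\le N-1. \]
Thus $RPT(\mathbf{\alpha})$ equals the number of such valid sequences with the prescribed multiset.

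Finally I would count valid sequences by the cycle lemma. Setting $m_i=\delta_i-2\ge -1$, the total becomes $\sum_i m_i=-2$ and the displayed inequality reads $\sum_{i\le k}m_i\ge -1$ for $k\le N-1$; geometrically, the walk with steps $m_i$ stays above $-2$ until its final step, where it drops to $-2$. The Dvoretzky--Motzkin cycle lemma (in the form counting forests of $p$ trees, here with $p=2$) asserts that every cyclic arrangement of a fixed step-multiset with $m_i\ge -1$ and total $-2$ has exactly $2$ rotations meeting this condition. A standard double count then finishes: summing the number of valid rotations ($=2$) over all $\binom{N}{\alpha_1,\alpha_2,\ldots}$ linear arrangements and regrouping the terms by rotation gives $N\cdot RPT(\mathbf{\alpha})=2\binom{N}{\alpha_1,\alpha_2,\ldots}$. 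The main obstacle is exactly this last step: pinning down the correct cycle-lemma constant (it is $2$, not the $1$ that governs the classical single-tree count, precisely because the degree walk has total $-2$ rather than $-1$) and making the double count robust for multisets whose arrangements are periodic, where one must check that each arrangement still contributes exactly two valid rotations. I would calibrate against the smallest nontrivial case $\{1^2,2^1\}$: among the three arrangements of $\{1,1,2\}$ only $(2,1,1)$ and $(1,2,1)$ are valid, matching $\tfrac{2}{3}\binom{3}{2,1}=2$.
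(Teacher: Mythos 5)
Your argument is correct, but note that the paper does not prove this lemma at all: it is stated as a known result and attributed to Mallows and Wacher, so there is no in-paper proof to compare against. Your self-contained derivation is a valid substitute. The reduction of the stated formula to $\frac{2}{N}\binom{N}{\alpha_1,\alpha_2,\ldots}$ with $N=\sum_i\alpha_i$ is right (the top of the multinomial equals $N-1$ exactly when $\sum_i i\alpha_i=2N-2$, the handshake identity for a tree); the translation from preorder total degrees to the {\L}ukasiewicz out-degree condition correctly yields $\sum_{i\le k}\delta_i\ge 2k-1$ for $k\le N-1$; and the shift $m_i=\delta_i-2$ puts you precisely in the setting of the cycle lemma with steps $\ge -1$ and total $-2$, where exactly $2$ of the $N$ starting positions of any cyclic word are good. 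You are right to flag periodicity as the delicate point, and your resolution is the correct one: the lemma counts good \emph{starting positions}, so the double count over pairs (linear arrangement, rotation amount) gives $N\cdot RPT(\mathbf{\alpha})=2\binom{N}{\alpha_1,\alpha_2,\ldots}$ with no special cases (e.g.\ the degenerate multiset $\{1^2\}$ has a single, fully periodic arrangement with both rotations good, and the count still comes out to $1$). The only cosmetic caveat is that when the multiset violates $\sum_i(i-2)\alpha_i=-2$ the right-hand side is only ``zero'' under the convention that a multinomial with mismatched top and bottom vanishes, but the paper only invokes the lemma for realizable degree multisets, so this does not matter.
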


For any sequence of non-negative integers $(\alpha_2,\alpha_3,\ldots)$, there is a plane tree with degree multiset $\{1^{\alpha_1},2^{\alpha_2},\ldots\}$ for an appropriate choice of $\alpha_1$, the number of leaves. 
In particular, $RPT(\alpha_1, \alpha_2, \ldots)\neq 0$ if and only if 
\begin{equation} \label{eqn-fold-a1}
 \alpha_1 = 2 + \alpha_3 + 2\alpha_4 + 3\alpha_5 + 4\alpha_6 + \cdots = 2 + \sum_{i=2}^\infty (i-2)\alpha_i. 
\end{equation}

\begin{lemma} \label{lma-deg-conds}
 For the multiset $\{2^{\alpha_2},3^{\alpha_3},\ldots\}$, 
  set $\alpha_1$ as in~\eqref{eqn-fold-a1}, 
  and let $T$ be a plane tree with the degree multiset $\{1^{\alpha_1}, 2^{\alpha_2},3^{\alpha_3},\ldots\}$. 
 Then $\alpha_1,\alpha_2,\ldots$ satisfy the conditions
 \begin{enumerate}
  \item $\alpha_i=0$ for $i> 2m$, and
  \item $\displaystyle\sum_{i=1}^{2m} i\alpha_i = 2n$,
 \end{enumerate}
%\todo[inline]{DR: Do we not need these conditions in \{lma-deg-conds\} to be if and only if in some fashion?\\RMK: They are if and only if: The second condition is equivalent to having $n$ edges, and the first is equivalent to $\chi'(T) = \Delta(T) \le 2m$, i.e. $T$ is $2m$-edge-colorable. We can change the Lemma to say ``The sequence $\alpha_1,\alpha_2,\ldots$ satisfies the conditions ... if and only if $T$ has $n$ edges and can be properly edge colored with $2m$ colors.'' The only change needed in the proof is ``verifies'' to ``is equivalent to''}
  if and only if $T$ has $n$ edges and can be properly edge-colored with $2m$ colors. 
%Thus, there is a $1$-foldable word of length $2n$ on an alphabet of size $m$ for which $T$ is valid.
\end{lemma}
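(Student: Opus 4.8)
The plan is to reduce both of the stated conditions to two standard facts about trees — the handshake identity and the chromatic-index characterization — and then assemble the biconditional by matching the pieces. The key point is that the two conditions are not independent: condition (1) is about the maximum degree (hence colorability), and condition (2), once (1) is known, is just the handshake identity.

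First I would record the degree--edge relationship. For a tree with degree multiset $\{1^{\alpha_1},2^{\alpha_2},\ldots\}$ the handshake lemma gives $\sum_{i\geq 1} i\alpha_i = 2|E(T)|$, so $T$ has exactly $n$ edges if and only if $\sum_{i\geq 1} i\alpha_i = 2n$. Second, I would establish that $T$ admits a proper $2m$-edge-coloring if and only if $\Delta(T)\leq 2m$, equivalently $\alpha_i = 0$ for all $i>2m$, which is precisely condition (1). The forcing direction is immediate: a vertex of degree $d$ is incident to $d$ edges that must receive $d$ distinct colors, so $2m\geq\Delta(T)$ is necessary. For the existence direction I would invoke Lemma~\ref{lma-fold-colors}: the count in~\eqref{eqn-fold-colorings} is a product of terms involving $\binom{2m-1}{i-1}$, which is nonzero exactly when $2m\geq i$ for every $i$ with $\alpha_i>0$; thus a proper $2m$-edge-coloring exists precisely when $\Delta(T)\leq 2m$. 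This is exactly the content of the remark following that lemma, so the argument stays self-contained within the paper.

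With these two facts in hand, I would prove the two implications separately. For the forward direction, assume (1) and (2): condition (1) gives $\Delta(T)\leq 2m$ and hence $2m$-edge-colorability, while under (1) the truncated sum in (2) coincides with the full degree sum $\sum_{i\geq 1} i\alpha_i$, so (2) reads $\sum_{i\geq 1} i\alpha_i = 2n$ and $T$ has $n$ edges. For the reverse direction, assume $T$ has $n$ edges and is $2m$-edge-colorable: colorability forces $\alpha_i=0$ for $i>2m$, which is (1), and then the handshake identity $\sum_{i\geq 1} i\alpha_i = 2n$ truncates to $\sum_{i=1}^{2m} i\alpha_i = 2n$, which is (2).

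I do not expect a genuine obstacle here; the entire content is the two standard facts and careful bookkeeping. The only subtlety worth flagging is that condition (2) is written as a finite sum $\sum_{i=1}^{2m} i\alpha_i$ rather than the full degree sum, so the truncation must be justified by condition (1) (equivalently, by colorability) — which is exactly why splitting into two implications keeps the dependence transparent. I would also remark that $\alpha_1$ is already fixed by the remaining $\alpha_i$ through~\eqref{eqn-fold-a1}, so the existence of the tree $T$ encodes the correct leaf count and no separate consistency condition on $\alpha_1$ is needed.
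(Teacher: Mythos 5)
Your proof is correct and follows essentially the same route as the paper's: condition (2) is the handshake identity for a tree with $n$ edges, and condition (1) is equivalent to $\Delta(T)\le 2m$, which for a tree is equivalent to proper $2m$-edge-colorability since $\chi'(T)=\Delta(T)$. The paper simply cites this last fact directly rather than deriving it from the counting formula in Lemma~\ref{lma-fold-colors}, but the substance and structure of the argument are the same.
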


\begin{proof}
 For any tree $T$, the edge chromatic number $\chi'(T) = \Delta(T)$. 
 Condition~1 is equivalent to saying $\Delta(T) \le 2m$, so $T$ is $2m$-edge-colorable. 
 Condition~2 is equivalent to $T$ having $n$ edges. 
\end{proof}
%\todo[inline]{DR: The last sentence in the proof above (now commented out) was not actually part of the proof, but rather exposition on the significance of the proof. So I have incorporated the idea below. LDK: Looks good to me.}
Lemma~\ref{lma-deg-conds} gives an explicit characterization of the degree conditions for a plane tree to have a proper edge-coloring. 
Having previously established a correspondence between foldings of $1$-foldable words and proper edge-colorings of plane trees, the following theorem is now clear. 
\begin{theorem} \label{thm-1-fold}
 The number of $1$-foldable words of length $2n$ on an alphabet with $m$ letters and their complements is
 \begin{equation} \label{eqn-1-fold}
  \sum \frac{2}{\alpha_1}\binom{n}{\alpha_1-1,\alpha_2,\alpha_3,\ldots,\alpha_{2m}}\cdot 
2m\prod_{i=1}^{2m}\left( \binom{2m-1}{i-1}(i-1)!\right)^{\alpha_i},
 \end{equation}
  where the sum is over all non-negative sequences $(\alpha_1, \alpha_2,\alpha_3,\ldots,\alpha_{2m})$ 
  such that $\sum_{i=1}^{2m} i\alpha_i = 2n$ and $\alpha_1 = 2 + \sum_{i=2}^{2m}(i-2)\alpha_i$.
\end{theorem}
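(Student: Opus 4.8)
The plan is to assemble the four preceding results into a single sum indexed by degree multisets. By Theorem~\ref{thm-1fold-bij}, the $1$-foldable words of length $2n$ on an alphabet with $m$ letters and their complements are in bijection with the proper $2m$-edge-colorings of plane trees with $n$ edges, so it suffices to enumerate the latter. I would partition this collection according to the degree multiset $\{1^{\alpha_1}, 2^{\alpha_2}, \ldots\}$ of the underlying plane tree and count the contribution of each multiset separately; this grouping is legitimate because, by Lemma~\ref{lma-fold-colors}, the number of proper $2m$-edge-colorings of a tree depends only on its degree multiset.

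First I would identify which multisets contribute. By Lemma~\ref{lma-deg-conds}, a plane tree with degree multiset $\{1^{\alpha_1}, 2^{\alpha_2}, \ldots\}$ has $n$ edges and admits a proper $2m$-edge-coloring exactly when $\alpha_i = 0$ for $i > 2m$ and $\sum_{i=1}^{2m} i\alpha_i = 2n$; combined with equation~\eqref{eqn-fold-a1}, which forces $\alpha_1 = 2 + \sum_{i=2}^{2m}(i-2)\alpha_i$ whenever a tree with this multiset exists, this yields exactly the index set of the claimed sum. For each admissible multiset, Lemma~\ref{lma-fold-treenum} supplies the number $RPT(\mathbf{\alpha})$ of plane trees, and Lemma~\ref{lma-fold-colors} with $k = 2m$ supplies the number $2m\prod_{i=1}^{2m}\left(\binom{2m-1}{i-1}(i-1)!\right)^{\alpha_i}$ of proper $2m$-edge-colorings of each such tree. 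Multiplying these two quantities and summing over all admissible multisets then produces formula~\eqref{eqn-1-fold}.

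The only point requiring verification is that the upper entry $1 + \alpha_2 + 2\alpha_3 + 3\alpha_4 + \cdots$ of the multinomial coefficient in Lemma~\ref{lma-fold-treenum} equals $n$ under the stated constraints, so that it may be written as $n$ in~\eqref{eqn-1-fold}. Since a tree on $\sum_{i\geq 1}\alpha_i$ vertices has $n = \sum_{i\geq 1}\alpha_i - 1$ edges, the desired identity $\sum_{i\geq 1}\alpha_i - 1 = 1 + \sum_{i\geq 2}(i-1)\alpha_i$ rearranges to $\alpha_1 = 2 + \sum_{i\geq 2}(i-2)\alpha_i$, which is precisely~\eqref{eqn-fold-a1}. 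I expect no genuine obstacle: the argument is bookkeeping that chains the established bijection and enumeration lemmas, and substituting $n$ for the multinomial's top entry is the single nonroutine identity to confirm.
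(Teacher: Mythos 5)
Your proposal is correct and follows the paper's proof exactly: the paper's own (one-line) argument cites Theorem~\ref{thm-1fold-bij} together with Lemmas~\ref{lma-fold-colors}, \ref{lma-fold-treenum}, and~\ref{lma-deg-conds}, plus the observation that the multinomial's top entry $1+\alpha_2+2\alpha_3+\cdots+(2m-1)\alpha_{2m}$ equals $n$, which is precisely the identity you verify at the end. You have simply written out the bookkeeping the paper leaves implicit.
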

%\todo[inline]{RMK: $\frac{4m}{n+1}\sum \binom{n+1}{\alpha_1,\alpha_2,\alpha_3,\ldots,\alpha_{2m}}\cdot\prod_{i=1}^{2m}\left( \binom{2m-1}{i-1}(i-1)!\right)^{\alpha_i}$ looks slightly cleaner and is equivalent. \\DR: Yes, it is slightly nicer, but not as direct from Equation~(4). And if we make this change, the examples below will need a bit of tweaking. \\LDK: I think in that case it's not worth the change.\\GC: I don't think it needs changing either.}

\begin{proof}
 By Theorem~\ref{thm-1fold-bij} and Lemmas~\ref{lma-fold-colors}, \ref{lma-fold-treenum}, and~\ref{lma-deg-conds} with the observation that
\[ n = 1 + \alpha_2 + 2 \alpha_3 + \cdots + (2m-1)\alpha_{2m}.\]
\end{proof}

\begin{example}
When $m=1$, expression~\eqref{eqn-1-fold} is a summation with one term, when $(\alpha_1, \alpha_2)=(2, n-1)$, and gives $2n$ words of length $2n$ which are $1$-foldable. 
The $2n$ words are exactly $\{A^i \o{A}\,^n A^{n-i}: i\in[n]\}$ and $\{\o{A}\,^i A^n\o{A}\,^{n-i}: i\in [n]\}$.
\end{example}

\begin{example}
When $m=2$, each term in~\eqref{eqn-1-fold} has the following form:
\begin{align*}
    & \frac{2}{\alpha_1}\binom{n}{\alpha_1-1,\alpha_2,\alpha_3,\alpha_4}\cdot 
4\prod_{i=1}^{4}\left( \binom{3}{i-1}(i-1)!\right)^{\alpha_i} \\
    &= \frac{8}{\alpha_1}\cdot\binom{n}{\alpha_1-1,\alpha_2,\alpha_3,\alpha_4}\cdot(1\cdot 0!)^{\alpha_1}\cdot(3\cdot 1!)^{\alpha_2} \cdot (3\cdot2!)^{\alpha_3} \cdot (1\cdot 3!)^{\alpha_4} \\
%    &= \frac{8}{2+\alpha_3+2\alpha_4}\cdot\binom{n}{1+\alpha_3+2\alpha_4,n-(1+2\alpha_3+3\alpha_4),\alpha_3,\alpha_4}\cdot 3^{n - (1+2\alpha_3+3\alpha_4)} \cdot 6^{\alpha_3 + \alpha_4} \\
    &= \frac{8}{3(2+\alpha_3+2\alpha_4)}\cdot\binom{n}{1+\alpha_3+2\alpha_4,n-2\alpha_3-3\alpha_4-1,\alpha_3,\alpha_4}\cdot 3^{n - \alpha_3 - 2\alpha_4} \cdot 2^{\alpha_3 + \alpha_4},
\end{align*}
with $\alpha_3$ and $\alpha_4$ positive integers such that $n > 2\alpha_3 + 3\alpha_4$. 
The multinomial coefficient pulls the maximum of this term away from the boundaries; that is, as $n$ grows, the maximum is not found where one of $\{\alpha_1-1,\alpha_2,\alpha_3,\alpha_4\}$ is $o(n)$. 
Let us therefore assume that $\lim_{n \to \infty} \frac{\alpha_3}{n} = x$ and $\lim_{n \to \infty} \frac{\alpha_4}{n} = y$ for positive constants $x$ and $y$ with $2x + 3y < 1$.
Applying Stirling's approximation, this term is asymptotically
\begin{align*}
    & \frac{(1 + o(1))^n}{ (x+2y)^{n(x+2y)} \cdot (1 - 2x - 3y)^{n(1 - 2x - 3y)} \cdot x^{xn} \cdot y^{yn} } \cdot 3^{n(1 - x - 2y)} \cdot 2^{n(x + y)} \\
    &= \left[ \frac{3 + o(1)}{1-2x-3y} \cdot \left(\frac{2(1-2x-3y)^2}{3x(x+2y)}\right)^{\!x} \cdot \left(\frac{2(1-2x-3y)^3}{9y(x+2y)^2}\right)^{\!y} \right]^n.
\end{align*}
Numerically, the base of this exponential is maximized when $(x,y) \approx (0.22103,0.07050)$, 
 which gives a maximum term of $(8.65936223 \pm 2^{-25})^n$. 
% \todo{DR: computations for this are in counting\_1fold.sagews.}
Since the number of terms in the sum is polynomial in $n$ for fixed $m$, this is also an asymptotic approximation for the whole sum. 
Compare this to the $16^n$ length-$2n$ words on an alphabet of $m=2$ letters and their complements, of which $(12+o(1))^n$ are foldable by equation~\eqref{eqn-foldable-asymp}. 
\end{example}

%%%%%%%%%%%%%%%%%%%%%%%%%
\subsection{2-foldable classification}

Using the bijection with edge-colored trees, we can also classify $2$-foldable words. 
In particular, a foldable word is $2$-foldable if the edge-colored tree corresponding to the greedy folding has only one pair of incident edges with the same color, 
 and the tree that results after making the corresponding Type~1 move at those edges has only one pair of incident edges with the same color. 

An equivalent characterization is those trees that have an $A$-decomposition, defined as follows. 

\begin{definition} An $A$-decomposition of a word $w$ is a list of words $u_1$, $u_2$, $u_3$, $v_1$, and $v_2$ such that

\begin{enumerate}
		\item $w = u_1Av_1\o{A}u_2Av_2\o{A}u_3$ for some (possibly barred) letter $A$,
%		\todo{DR: is this "some unbarred letter $A$" or "some (possibly barred) letter $A$" (in which case $\o{A}$ denotes its unbarred complement)? RMK: $|u_1|$ may be even or odd, so we allow a barred letter $A$}
		\item the words $u_1u_3$, $u_2$, $v_1$, and $v_2$ are foldable, and
		\item the words $u_1u_2A\o{A}u_3$ and $Av_1v_2\o{A}$ are $1$-foldable.
%		\item in Figure~\ref{fig-1-decomp}, the subtree $u_1u_2A\o{A}u_3$ in the left valid tree and the subtree $Av_1v_2\o{A}$ in the right valid tree are properly edge-colored.
%		\todo{DR: Rather than referencing the figure, maybe we should say $u_1u_2A\o{A}u_3$ and $Av_1v_2\o{A}$ are $1$-foldable.}
\end{enumerate}	
\end{definition}

%\todo{DR: I think a clarification such as this is necessary. Feel free to modify my wording and/or delete this todo note.}
Note that we consider here any word in $\cS$, not necessarily with an alternating bar pattern. 
Moreover, $A$ in condition~(1) may be a barred letter. 
In this case, $\o{A}$ signifies its unbarred complement. 
%Also note that condition~(2) implies $u_1u_3$, $v_1$, $u_2$, and $v_2$ are all even length.% and $u_1$ and $u_3$ have the same parity. 
%Therefore, although $w$ may not have an alternating bar pattern, all of the edges labeled with $A$ and $\bar{A}$ in Figure~\ref{fig-1-decomp} have the same color.
%\todo[inline]{RMK: I don't think we are assuming that $w$ has an alternating bar pattern. Like I said in the comment below we need to be clearer about this. \\DR: Good point. I made some changes.\\RMK: I added ``Therefore, although $w$ may not have an alternating bar pattern, all of the edges labeled with $A$ and $\bar{A}$ in Figure \ref{fig-1-decomp} have the same color.'' \\DR: I'm uncomfortable saying "same color" here when there is no mention of a coloring in the def. of $A$-decomp.\\ HS: Do you think we could just drop that last sentence? From the picture, this is just the Type~1/2 moves from the beginning of the paper. There is no need to talk about parity if the pictures looks like Figure~4.}

\begin{figure}[!ht]
 \centering
 
 \begin{tikzpicture}
 \foreach \x in {(-2,0),(0,2), (2,0)} \draw \x [fill=black] circle (0.06);
 \draw (-2,0)--(0,2)--(2,0);
\draw [dashed] (0,2)..controls+(-.5,.8)..(0,3)..controls+(.5,-.2)..(0,2);
 \draw [dashed] (0,2)..controls+(-.4,-.8)..(0,1)..controls+(.4,.2)..(0,2);
 \draw [dashed] (-2,0)..controls+(-.75,-.25)..(-2.75,-.75)..controls+(.5,0)..(-2,0);
 \draw [dashed] (2,0)..controls+(.75,-.25)..(2.75,-.75)..controls+(-.5,0)..(2,0);
 \draw (-2.4,-.4) node {$v_1$};
 \draw (2.4,-.4) node {$v_2$};
 \draw (0,2.65) node {$u_1u_3$};
 \draw (0,1.35) node {$u_2$};
 \draw (-1.2,1.2) node  {$A$};
 \draw (-.85,.7) node  {$\o{A}$};
 \draw (1.2,1.2) node  {$\o{A}$};
 \draw (.9,.65) node  {$A$};

  \begin{scope}[shift={+(1,0)}]
 \foreach \x in {-.5,1,2.5} \draw [fill=black] (6,\x) circle (0.06);
 \draw (6,-.5)--(6,2.5);
 \draw [dashed] (6,2.5)..controls+(-.5,.8)..(6,3.5)..controls+(.5,-.2)..(6,2.5);
 \draw [dashed] (6,-.5)..controls+(-.4,-.8)..(6,-1.5)..controls+(.4,.2)..(6,-.5);
 \draw [dashed] (6,1)..controls+(-.75,0)..(5,.5)..controls+(.5,-.1)..(6,1);
 \draw [dashed] (6,1)..controls+(.75,0)..(7,.5)..controls+(-.5,-.1)..(6,1);
 \draw (6,3.1) node {$u_1u_3$};
 \draw (6,-1.1) node {$u_2$};
 \draw (6.6,.7) node {$v_2$};
 \draw (5.4,.7) node {$v_1$};
 \draw (6,1.79) node [right] {$\o{A}$};
 \draw (6,1.75) node [left] {$A$};
 \draw (6,.15) node [right] {$A$};
 \draw (6,.19) node [left] {$\o{A}$};
 \end{scope}
 \end{tikzpicture}
 
 \caption{Two valid trees of a word with an $A$-decomposition.}
 \label{fig-1-decomp}
\end{figure}
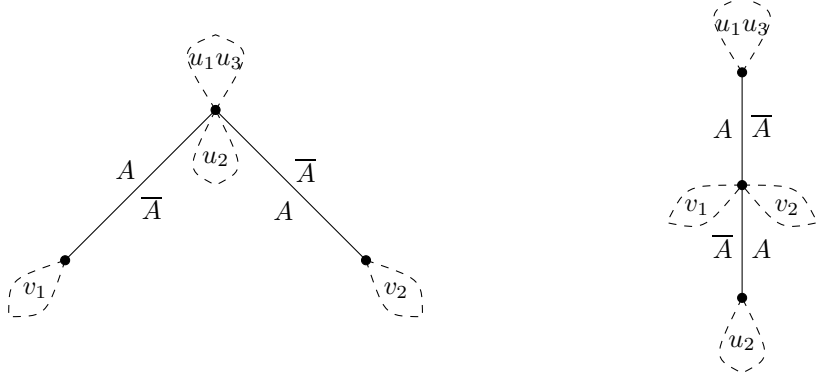

\begin{theorem} 
A word $w$ is $2$-foldable if and only if it has an $A$-decomposition.
\end{theorem}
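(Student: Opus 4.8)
The plan is to route everything through the graph $G_w$ of valid trees and the coloring bijection of Theorem~\ref{thm-Tnm-Cnm}, under which a local move is available exactly at a pair of incident edges receiving the same color. First I would record the following clean reformulation of $2$-foldability. By Theorem~\ref{thm-graph}, $G_w$ is connected with the greedy folding $T_0$ as its unique source, so $w$ is $2$-foldable precisely when $\cV(w)=\{T_0,T_1\}$ and $G_w$ is the single arc $T_0\to T_1$. At a source the in-degree is $0$, so every available move is a Type~1 move and the number of monochromatic incident edge-pairs of $T_0$ equals its out-degree, which is $1$; at $T_1$ the available moves are its one Type~1 in-edge (read backwards as a Type~2 move) together with its Type~1 out-edges, of which there are none since $T_1$ is a sink. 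Hence $w$ is $2$-foldable if and only if both $T_0$ and $T_1$ have exactly one monochromatic incident pair, and these two pairs are the two ends of the same move. This is exactly the informal characterization stated above, now made precise, and it is the bridge I would use in both directions.

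For the reverse implication, given an $A$-decomposition $w=u_1Av_1\o{A}u_2Av_2\o{A}u_3$, I would first fold the four foldable pieces guaranteed by condition~(2) into subtrees and assemble the two plane trees of Figure~\ref{fig-1-decomp}, matching the two $A$'s with the two $\o{A}$'s in the two available non-crossing ways. A direct check shows both are $w$-valid and distinct, and that the single re-pairing is a move between them. It then remains to prove there is no third valid tree, i.e. that neither assembled tree has a second monochromatic pair. This is where condition~(3) enters: the words $u_1u_2A\o{A}u_3$ and $Av_1v_2\o{A}$ are the words read off the two trees after collapsing the duplicated edge and subtree data onto a single representative edge, and their $1$-foldability (via Theorem~\ref{thm-1fold-bij}) says precisely that the corresponding colorings are proper away from the distinguished edge. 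Feeding this back shows each assembled tree has its unique monochromatic pair exactly at the move, so by the reformulation $w$ is $2$-foldable.

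Conversely, suppose $w$ is $2$-foldable, and let $e_1,e_2$ be the incident edges carrying the unique move, with half-edge labels $A,\o{A}$ and $\o{A},A$. Traversing the half-edges of $T_0$ counterclockwise, the four labels of $e_1,e_2$ occur in the pattern $A,\o{A},A,\o{A}$ and cut the word into five blocks, giving condition~(1) with $u_1,v_1,u_2,v_2,u_3$ named as in Figure~\ref{fig-1-decomp} (here $u_1$ and $u_3$ are the initial and final parts of the subtree hanging above the move, while $v_1,v_2,u_2$ are the three subtrees separated by $e_1,e_2$). Each block is itself the half-edge word of a subtree and so is foldable, giving condition~(2). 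For condition~(3) I would localize: valid trees of $w$ that leave the pair $e_1,e_2$ fixed correspond to valid trees of the side configurations, so the $1$-foldability of $Av_1v_2\o{A}$ and of $u_1u_2A\o{A}u_3$ is forced by the absence of a second monochromatic pair in $T_0$ and $T_1$ established in the reformulation; any second folding of one of these auxiliary words would lift to a third valid tree of $w$.

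The main obstacle is the last step of each direction: pinning down the exact correspondence between the valid trees (equivalently, the extra monochromatic pairs) of $w$ and those of the two auxiliary words $u_1u_2A\o{A}u_3$ and $Av_1v_2\o{A}$. I expect to need a short lemma asserting that foldings of $w$ which keep the pair $e_1,e_2$ intact are in bijection with foldings of the relevant auxiliary word, so that the property of being $1$-foldable transfers in both directions and the equivalence is tight rather than a one-way implication. Getting the block bookkeeping of the half-edge order to line up with the precise combined words in condition~(3) --- in particular verifying that it is $u_1u_2A\o{A}u_3$ and $Av_1v_2\o{A}$, rather than the pieces separately, that must be $1$-foldable --- is the delicate part.
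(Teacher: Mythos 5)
Your proposal follows essentially the same route as the paper's proof: both directions run through the edge-coloring bijection, the identification of local moves with monochromatic pairs of incident edges, and the connectedness of $G_w$ with the greedy folding as unique source, with the two trees of Figure~\ref{fig-1-decomp} realizing the decomposition. The ``delicate'' localization step you flag---matching any extra moves of $w$ with foldings of the auxiliary words $u_1u_2A\o{A}u_3$ and $Av_1v_2\o{A}$---is precisely the point the paper's own, terser proof asserts without further argument, so your plan is if anything more explicit about what remains to be checked.
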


%\todo[inline]{RMK: I think we could be clearer about how a folding of $w$ (which does not necessarily have an alternating bar pattern) still corresponds to an edge coloring of a tree. In particular, all of the edges labeled with $A$ and $\bar{A}$ in the figure have the same color, because $v_1$, $u_2$, and $v_2$ all have even length, because they are foldable. Similarly the coloring of $v_1$ is the same in both trees (and this is necessary for condition 3 to imply there are no other local moves). Same for $u_2$ and for $v_2$. \\DR: Are the statements added after the definition sufficient, or do we still need to address the correspondence to a coloring somehow?\\WG: I think the added statements are sufficient.\\RMK: I added ``Parts (2) and (3) of the definition imply that $v_1$, $u_2$, and $v_2$ are $1$-foldable, so they have the same edge coloring in both trees.'' below. Now I think it is OK. \\DR: Looks good.}

\begin{proof}
Suppose $w$ has an $A$-decomposition. 
Then $w = u_1Av_1\o{A}u_2Av_2\o{A}u_3$ can be folded into the two edge-colored trees using parts~(1) and~(2) of the definition of $A$-decomposition. 
Parts~(2) and~(3) of the definition imply that $u_1u_3$, $v_1$, $u_2$, and $v_2$ are $1$-foldable. %, so they have the same edge coloring in both trees. 
By part~(3) of the definition, the only incident edges with local moves %of the same color
in either of these two trees are the edges labeled by $A\o{A}$ or $\o{A}A$ in Figure~\ref{fig-1-decomp}. 
Thus there are no other local moves, and since the state space graph is connected, these are the only two foldings of $w$.

Now suppose $w$ is $2$-foldable. 
The two foldings correspond bijectively to two edge-colored trees, which are adjacent by local moves shown in Figure~\ref{fig-prelim-decomp}. 
Thus we have properties~(1) and~(2) of an $A$-decomposition, and~(3) follows from the fact that $w$ is $2$-foldable so has no other local moves.
\end{proof}

%%%%%%%%%%%%%%%%%%%%%%%%%
\section{The values in $\cR(n,m)$} \label{sec:R(n,m)}

In this section we develop a better understanding of the set $\cR(n,m)$ of all $k$ for which there is a word in $\cS(n,m)$ which is $k$-foldable. %\todo{RMK: It looks like we are including $k=0$ in $\cR(n,m)$, so we should say words in  $\cS(n,m)$ instead of words in $\cP(n,m)$. \\DR: Agreed.}
Black, Drellich, and Tymoczko~\cite{black2015} initiated this study with the following proposition. 

\begin{proposition}[\cite{black2015}]
Let $C_i \coloneqq \frac{1}{i+1} \binom{2i}{i}$, 
 the $i\textsuperscript{th}$ Catalan number.
For integers $n>1$ and $m>0$, 
 $\{C_{n-1},C_n\} \subset \cR(n,m)$ but if $C_{n-1}<k<C_n$ then $k\not\in \cR(n,m)$.
\label{prop-top-gap}
\end{proposition}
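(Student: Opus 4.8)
The plan is to treat the three assertions separately: that $C_n\in\cR(n,m)$, that $C_{n-1}\in\cR(n,m)$, and that no integer strictly between them belongs to $\cR(n,m)$. For the two membership claims I would exhibit explicit words using only the pair $A_1,\o{A}_1$, so that the constructions work for every $m\ge 1$. The word $(A_1\o{A}_1)^n$ is $w$-valid for every plane tree with $n$ edges (as already noted), so it is $C_n$-foldable. For $C_{n-1}$ I would take $w=A_1(A_1\o{A}_1)^{n-1}\o{A}_1$: the leading $A_1$ occupies an odd position, and the only even position carrying $\o{A}_1$ is the last one, so in every folding $w[1]$ must bond with $w[2n]$; the interior letters form $(A_1\o{A}_1)^{n-1}$, which admits all $C_{n-1}$ non-crossing perfect matchings. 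Hence this word is exactly $C_{n-1}$-foldable and $\{C_{n-1},C_n\}\subset\cR(n,m)$.

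The real content is the gap, which I would phrase as a statement proved by strong induction on $n$: every foldable word $w$ of length $2n$ satisfies $|\cV(w)|=C_n$ or $|\cV(w)|\le C_{n-1}$. The engine is the leftmost-edge decomposition. Writing $w[1]=A_1$ (the barred case is symmetric), the first half edge lies on the leftmost edge at the root, whose other half sits at some even position $2\ell$ with $w[2\ell]=\o{A}_1$; the subtree below this edge carries $\text{in}_\ell\coloneqq w[2]\cdots w[2\ell-1]$ and the remainder carries $\text{out}_\ell\coloneqq w[2\ell+1]\cdots w[2n]$. This yields
\[
|\cV(w)|=\sum_{\ell\,:\,w[2\ell]=\o{A}_1}\bigl|\cV(\text{in}_\ell)\bigr|\,\bigl|\cV(\text{out}_\ell)\bigr|
\;\le\;\sum_{\ell=1}^{n}C_{\ell-1}C_{n-\ell}=C_n ,
\]
the last equality being the Catalan recurrence. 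I would then invoke super-multiplicativity of the Catalan numbers, $C_aC_b\le C_{a+b}$ (strict for $a,b\ge 1$, via concatenation of Dyck paths), to conclude that each summand is at most $C_{n-1}$, with equality possible only for the extreme indices $\ell\in\{1,n\}$. In particular, if $w[1]$ has a unique legal bonding partner, the sum reduces to one term bounded by $C_{n-1}$, and that case is finished immediately.

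The remaining and hardest case is when $w[1]$ has at least two legal partners. Here I would apply the inductive hypothesis to $\text{in}_\ell$ and $\text{out}_\ell$: each factor is either maximal, forcing it to be an alternating single-letter word, or else drops to at most the preceding Catalan number. A maximal inner factor pins the block $w[2],\dots,w[2\ell-1]$ to an alternating pattern, and a maximal outer factor pins the tail; the crucial combinatorial point is that the patterns forced by two maximal factors in different terms are mutually compatible only if they agree and glue together into the single word $(A_1\o{A}_1)^n$. Thus, if $w$ is not that word, at least one factor in every term is non-maximal, and tracking the resulting deficiencies term-by-term should show that the total loss $C_n-|\cV(w)|$ is either $0$ (the alternating word, giving $C_n$) or at least $C_n-C_{n-1}$.

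I expect the main obstacle to be exactly this last bookkeeping: proving that a single non-alternating ``defect'' in $w$ forces a whole cluster of decomposition terms to vanish or shrink, so that the deficiency cannot be positive yet smaller than $C_n-C_{n-1}$. The two levers I would rely on are the super-multiplicativity bound (which makes each term small away from the extremes) and the rigidity of maximal subwords (which, by induction, admit no intermediate sizes); the delicate part is combining them to rule out a conspiracy of moderately sized terms that would place $|\cV(w)|$ in the open interval $(C_{n-1},C_n)$. The base cases ($n=2$, where the interval contains no integer) are immediate, so the induction would rest entirely on carrying out this cluster estimate.
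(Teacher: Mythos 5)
The paper does not actually prove this proposition; it is quoted from Black, Drellich, and Tymoczko, so there is no in-paper argument to compare against. On its own terms, your proposal handles the two membership claims correctly: $(A_1\o{A}_1)^n$ is valid for every plane tree with $n$ edges, and in $A_1(A_1\o{A}_1)^{n-1}\o{A}_1$ the first letter must bond with a complement in an even position, of which $w[2n]$ is the only one, so the interior folds freely in $C_{n-1}$ ways. The leftmost-edge decomposition, the identity $\sum_{\ell}C_{\ell-1}C_{n-\ell}=C_n$, the bound $C_{\ell-1}C_{n-\ell}\le C_{n-1}$ with equality only at $\ell\in\{1,n\}$, and the single-partner case are all sound.

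The genuine gap is exactly where you flag it, and it is not mere bookkeeping. In the multi-partner case you must show $\sum_{\ell\in S}|\cV(\mathrm{in}_\ell)|\,|\cV(\mathrm{out}_\ell)|\le C_{n-1}$ whenever $w$ is not the alternating word, but the two levers you name cannot deliver this on their own. Even if you establish that every term has a non-maximal factor, the inductive hypothesis only gives $|\cV(\mathrm{in}_\ell)|\,|\cV(\mathrm{out}_\ell)|\le\max\left(C_{\ell-2}C_{n-\ell},\,C_{\ell-1}C_{n-\ell-1}\right)\le C_{n-2}$, so the crude bound is $nC_{n-2}$, which exceeds $C_{n-1}$ for every $n$ because $C_{n-1}/C_{n-2}=2(2n-3)/n<4$; summing the degraded products over all $\ell$ instead yields roughly $2\sum_{a+b=n-2}C_aC_b=2C_{n-1}$, still too large by a factor of two. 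Hence ruling out a collection of moderately sized terms summing into the open interval $(C_{n-1},C_n)$ requires an input beyond super-multiplicativity and the rigidity of maximal subwords, for instance an argument that a single defect in $w$ simultaneously kills or shrinks many decomposition terms, or a direct injection of $\cV(w)$ into the plane trees with $n-1$ edges. As written, the hardest of the three assertions remains a plan rather than a proof.
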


%A more casual way to write this, which we will use for similar results, is $\cR(n,m) \subseteq \{0,\ldots,C_{n-1},C_n\}$.

%For the Catalan numbers, see http://oeis.org/A000108 or http://www-math.mit.edu/~rstan/ec/catadd.pdf

Wagner~\cite{wagner2015} further investigated $\cR(n,m)$ and established monotonicity in the following sense.

\begin{proposition}[\cite{wagner2015}]\label{prop-Rnm-n-monotone}
For positive integers $n$ and $m$, 
 $\cR(n,m) \subsetneq \cR(n+1,m).$
\end{proposition}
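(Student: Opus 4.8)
The plan is to prove the two halves of the strict inclusion separately: first that $C_{n+1} \in \cR(n+1,m) \setminus \cR(n,m)$, which already forces $\cR(n,m) \neq \cR(n+1,m)$, and then that $\cR(n,m) \subseteq \cR(n+1,m)$. Combining the strictness witness with the inclusion yields $\cR(n,m) \subsetneq \cR(n+1,m)$.

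For the strictness witness I would argue as follows. Every $w$-valid tree is a plane tree with $n$ edges, and there are exactly $C_n$ such trees, so $|\cV(w)| \le C_n$ for every $w \in \cS(n,m)$; hence $\cR(n,m) \subseteq \{0,1,\ldots,C_n\}$. On the other hand, applying Proposition~\ref{prop-top-gap} with $n$ replaced by $n+1$ (legitimate since $n\ge 1$ gives $n+1>1$) yields $\{C_n, C_{n+1}\} \subseteq \cR(n+1,m)$. Because $C_{n+1} > C_n$, the value $C_{n+1}$ lies in $\cR(n+1,m)$ but not in $\cR(n,m)$. This step is short and self-contained.

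The substance is the inclusion $\cR(n,m) \subseteq \cR(n+1,m)$: given $k \in \cR(n,m)$, I must produce a $k$-foldable word of length $2(n+1)$. I would fix a $k$-foldable witness $w$ of length $2n$ and append a forced pendant edge. Concretely, if some letter value $A_i$ is used by neither $w$ nor its complement, I set $w' = w\,A_i\o{A}_i$. Reading $w'$-valid trees through the half-edge ordering, position $2n+2$ carries $\o{A}_i$ and must be matched with a half edge carrying $A_i$; since $A_i$ occurs in $w'$ only at position $2n+1$, that pairing is forced, so the last two half edges always form a pendant leaf attached as the rightmost child of the root. Deleting this leaf gives a bijection $\cV(w') \leftrightarrow \cV(w)$, so $w'$ is $k$-foldable. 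On the tree side of Theorem~\ref{thm-Tnm-Cnm} this is exactly the map that adjoins a new rightmost leaf to the root and colors its edge with the unused color $i$.

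The main obstacle is the case in which no letter value is free---in particular $m = 1$, where every length-$(2n+2)$ word uses only $A_1$ and $\o{A}_1$, so the fresh-color forcing is unavailable and a naive wrap or append can create additional foldings. Here I would instead insert the new complementary pair at a position of $w$ selected by scanning the word, and prove by a direct non-crossing argument that the inserted pair can be matched only to itself, so that deleting it is again a bijection onto $\cV(w)$; the delicate point is guaranteeing that such a barrier position exists for every $k$-foldable word. I expect this alphabet-exhausted case, and especially the single-letter case, to absorb essentially all the difficulty, while the remaining cases follow immediately from the clean fresh-color construction above.
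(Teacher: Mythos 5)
This proposition is quoted from Wagner~\cite{wagner2015}; the paper gives no proof of it, so there is nothing internal to compare your argument against, and your proposal must stand on its own. Two pieces of it do: the strictness witness ($|\cV(w)|\le C_n$ for $w\in\cS(n,m)$ since there are only $C_n$ plane trees with $n$ edges, while Proposition~\ref{prop-top-gap} puts $C_{n+1}$ in $\cR(n+1,m)$) is correct, and the fresh-letter construction $w\mapsto wA_i\o{A}_i$ is a genuine bijection on valid trees whenever some letter $A_i$ is unused by $w$.

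The gap is exactly where you flag it, and it is not a formality: the case where every letter is used, in particular $m=1$, \emph{is} the proposition, and your proposal contains no argument for it --- only the hope that a ``barrier position'' exists at which the inserted pair is forced to self-match. The obvious candidates all fail. Appending or prepending $A\o{A}$ turns the $1$-foldable word $A\o{A}$ into the $2$-foldable word $A\o{A}A\o{A}$; wrapping does the same to $\o{A}A$. Even the more careful choice of inserting $A\o{A}$ immediately after the maximal initial block $X^a$ does not force the inserted pair to match itself: for $w=A\o{A}AA\o{A}\o{A}$ this produces $w'=AA\o{A}\o{A}AA\o{A}\o{A}$, which admits the fully nested folding $(1,8),(2,7),(3,6),(4,5)$ in which the inserted letters at positions $2$ and $3$ are matched to positions $7$ and $6$ rather than to each other. (Here $|\cV(w)|=|\cV(w')|=3$, but the equality is not witnessed by your deletion map, so a different bijection --- or a different construction entirely --- is needed.) Until you exhibit a specific insertion point for an arbitrary foldable word over a fully used alphabet and prove the resulting deletion/insertion maps are mutually inverse on valid foldings, the inclusion $\cR(n,1)\subseteq\cR(n+1,1)$, and hence the proposition, remains unproved.
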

Note that the previous two propositions establish that $C_i\in \cR(n,m)$ for $1\leq i\leq n$. 
Wagner also showed monotonicity of $\cR(n,m)$ in $m$. 

\begin{proposition}[\cite{wagner2015}]
For positive integers $n$ and $m$, 
 $\cR(n,m) \subseteq \cR(n,m+1)$. 
Further, there exist $n$ and $m$ such that $\cR(n,m) \neq \cR(n,m+1)$. 
\end{proposition}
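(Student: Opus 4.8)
The plan is to prove the two assertions separately: the inclusion $\cR(n,m) \subseteq \cR(n,m+1)$ by an embedding of alphabets, and the strictness by producing, for a well-chosen pair $(n,m)$, a fold-count that is realized with $m+1$ letters but not with $m$.

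For the inclusion I would first observe that an alphabet on $m$ letters and their complements sits inside one on $m+1$ letters, so $\cS(n,m) \subseteq \cS(n,m+1)$. The key point is that $w$-validity of a plane tree---and therefore the entire set $\cV(w)$---is defined only in terms of the letters occurring in $w$ and the complement relation among them; it makes no reference to the size of the ambient alphabet. Hence a word $w$ has exactly the same valid trees, and in particular the same fold-count $|\cV(w)|$, whether it is regarded as an element of $\cS(n,m)$ or of $\cS(n,m+1)$. Thus if $k\in\cR(n,m)$, any witness $w\in\cS(n,m)\subseteq\cS(n,m+1)$ is again a witness, giving $k\in\cR(n,m+1)$.

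For the strictness I would manufacture candidate values in the larger set by a product construction. The lemma I would establish is: if $w_1$ and $w_2$ are foldable words on \emph{disjoint} alphabets that are $k_1$- and $k_2$-foldable, then the concatenation $w_1w_2$ is $(k_1k_2)$-foldable. Indeed, since the alphabets are disjoint no matching edge of $w_1w_2$ can join a letter of $w_1$ to a letter of $w_2$, so every edge lies entirely within one factor; such a matching is non-crossing exactly when each of its restrictions to $w_1$ and to $w_2$ is, so $\cV(w_1w_2)$ is in bijection with $\cV(w_1)\times\cV(w_2)$. Taking each $w_i=(A_i\o{A}_i)^{n_i}$ on a one-letter alphabet, which is $C_{n_i}$-foldable, places every Catalan product $\prod_i C_{n_i}$ in $\cR(\sum_i n_i,\,r)$, where $r$ is the number of factors. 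In particular any such product with at least two nontrivial factors lies in $\cR(n,m+1)$ for a suitable small $m$, and it then suffices to find one such value lying outside $\cR(n,m)$.

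The crux---and the step I expect to be the main obstacle---is certifying this non-membership. The top gap of Proposition~\ref{prop-top-gap} cannot help, because the excluded interval $(C_{n-1},C_n)$ is identical for every $m$; consequently any distinguishing value must lie in the interior range below $C_{n-1}$, where Proposition~\ref{prop-top-gap} gives no information. Certifying that a specific interior value is unattainable with only $m$ letters requires finer control over which fold-counts occur under a bounded alphabet. I would obtain this in one of two ways. First, from the Catalan-based superset of $\cR(n,1)$ and its additional missing intervals, developed later in Section~\ref{sec:R(n,m)}, by locating a two-letter Catalan product inside such a gap of $\cR(n,1)$. Alternatively---and noting that $\cR(n,m)$ stabilizes once $m\ge n$, since a length-$2n$ foldable word uses at most $n$ distinct letter-types, so the chain $\cR(n,1)\subseteq\cR(n,2)\subseteq\cdots$ must make its jump at some $1\le m<n$---by a direct finite enumeration of the fold-counts of all length-$2n$ single-letter words at the smallest $n$ for which $\cR(n,1)$ first omits a Catalan product realizable with two letters, thereby exhibiting an explicit pair with $\cR(n,1)\subsetneq\cR(n,2)$.
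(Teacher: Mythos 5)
The paper itself gives no proof of this proposition; it is cited from Wagner~\cite{wagner2015}, so there is no internal argument to compare against. Judged on its own terms, your proof of the inclusion $\cR(n,m)\subseteq\cR(n,m+1)$ is correct and complete: validity of a tree for $w$ depends only on the letters of $w$ and the complement relation, not on the ambient alphabet, so a witness for $k$ in $\cS(n,m)$ remains a witness in $\cS(n,m+1)$. Your product lemma is also sound: for words on disjoint alphabets every matching edge stays within one factor, the two factors occupy consecutive blocks so cannot cross each other, and hence $\cV(w_1w_2)\cong\cV(w_1)\times\cV(w_2)$.

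The strictness claim, however, is not actually proved, and the two routes you sketch both have problems. Route (a) --- finding a two-factor Catalan product $C_aC_b$ in a gap of $\cR(n,1)$ --- cannot work, because Proposition~\ref{prop-large_gaps} (via the single-letter word $(A\o{A})^a(\o{A}A)^b$) shows $C_aC_{n-a}\in\cR(n,1)$ for all $a$; two-factor Catalan products are never missing from $\cR(n,1)$, so they cannot separate $\cR(n,1)$ from $\cR(n,2)$. You would need some other value realizable with two letters, and the paper's data only enumerates $\cR(n,1)$, so no witness can be read off from it. Route (b) is circular as phrased: the stabilization $\cR(n,m)=\cR(n,n)$ for $m\ge n$ is a correct and useful observation, but the conclusion that the chain ``must make its jump at some $1\le m<n$'' presupposes that a jump exists for that $n$, which is exactly the assertion being proved. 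What remains is an unexecuted finite search with no exhibited pair $(n,m)$ and no candidate value $k\in\cR(n,m+1)\setminus\cR(n,m)$; since the second half of the proposition is precisely an existence statement, the proof is incomplete without such an explicit witness.
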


We focus our attention mainly on the case $m=1$. 
To give some indication of the values in $\cR(n,1)$, 
 computationally we find:

\begin{align*}
    \cR(0,1) = \{1\}; \hspace{.4in} \\
    \cR(1,1) = \cR(0,1) \cup \{& 0\}; \\
    \cR(2,1) =  \cR(1,1) \cup \{& 2\}; \\
    \cR(3,1) =  \cR(2,1) \cup \{& 5\}; \\
    \cR(4,1) =  \cR(3,1) \cup \{& 3,4,14\}; \\
    \cR(5,1) =  \cR(4,1) \cup \{& 7,10,42\}; \\
    \cR(6,1) =  \cR(5,1) \cup \{& 6,8,12,16,18,19,25,28,132\}; \\
    \cR(7,1) =  \cR(6,1) \cup \{& 9,15,20,30,40,43,52,56,70,84,429 \};\\
    \cR(8,1) =  \cR(7,1)\cup \{& 22,23,24,26,32,35,36,38,50,55,73,74,80,85,96, \\
    & 106,114,115,126,157,160,174,196,210,264,1430 \}.
\end{align*}

Working toward a more thorough understanding of the set $\cR(n,1)$,  
 we first construct a superset of $\cR(n,1)$ in Theorem~\ref{thm-Rnm-superset} providing some structure for the values that can appear in $\cR(n,1)$. 
%Toward a subset of $\cR(n,1)$, 
From there, we determine intervals of integers which do not lie in the set, 
 such as integers in the interval $[C_{n-1}+1, C_n -1]$ from Proposition~\ref{prop-top-gap}. %\todo{RMK: Isn't a subset of $\cR(n,1)$ a set of numbers that must lie in the set, not a set of numbers that do not lie in the set? \\DR: Fixed.}
Then, in search of the smallest value $k$ such that $k\not\in \cR(n,1)$, 
 we conclude by proving $\{0,1,2,\ldots, n\}\subseteq \cR(n,1)$ and hence in $\cR(n,m)$.

%%%%%%%%%%%%%%%%%%%%%%%%%
\subsection{Catalan numbers and $\cR(n,1)$}

The Catalan numbers, which enumerate the  plane trees, are an integral part of the set $\cR(n,m)$. As already noted,  $C_t\in \cR(n,1)$ for all $1\leq t\leq n$. In fact, $C_t$ is the number of foldings of $(A\o{A})^t$, and this is the maximum number of foldings for a word of length $2t$. 
Theorem~\ref{thm-Rnm-superset} establishes a superset of $\cR(n,1)$ which highlights the fundamental nature of the Catalan numbers in the values of $\cR(n,1)$. 
The following discussion and examples motivate the theorem. 
%\todo[inline]{MP,HCS: Paragraph connecting trees and non-crossings matchings. \\HS: I tried to make this connection a little more clear betweent the intro and the preliminary sections.} 

As previously mentioned, for calculating the values in $\cR(n,1)$, it suffices to consider words in $\hP(n,2)$, foldable words which strictly alternate between unbarred and barred letters. 
For readability in this case, we will use $A$ and $B$ instead of $A_1$ and $A_2$. 
%$w\in \{\{A,B\} \times \{\o{A},\o{B}\}\}^{n}$. 
Fix such a foldable word $w$ with $t$ entries that are $A$ and $n-t$ entries that are $B$. 
%Assuming $w$ is foldable, there are $t$ entries which are $\o{A}$ and $n-t$ entries which are $\o{B}$.
%\todo{WG: I don't think this sentence is necessary. Because it is mentioned earlier that we are considering foldable words. Then the number of entries of $\o{A}$ and $\o{B}$ should be obvious. \\HS: Good point!}
Without loss of generality, assume $w$ begins with $A$. 
For example, let $w=A\o{A}(B\o{B})^5 A\o{A} (B\o{B})^7 A\o{A} (B\o{B})^4$. Here $t = 3$ and $n - t = 5 + 7 + 4 = 16$.

%\todo[inline]{DR: For consistancy, I think we should either say ``$A$-matchings'' and ``$B$-subwords'' or ``$\{A,\o{A}\}$-matchings'' and ``$\{B,\o{B}\}$-subwords''. I prefer the former. \\RMK: I agree. \\LDK: I agreed to so I went ahead and changed it. I think I caught them all. (Also changed $\{B,\o{B}\}$-groupings.)}
Now consider the maximal subwords (consecutive letters) of $w$ which consist of only the letters $B$ and $\o{B}$. 
We call these subwords \emph{maximal $B$-subwords.}
Let $\ell_1, \ell_2, \ldots, \ell_m$ be the number of letters in each of these maximal $B$-subwords. 
Consequently $m \leq 2t$ and $\sum_{i=1}^{m} \ell_i = 2(n-t)$. 
In our present example, $w$ has $m = 3$ maximal $B$-subwords with lengths $\ell_1=10$, $\ell_2=14$, and $\ell_3=8$.

Fix a non-crossing matching on the letters $A$ and $\o{A}$ in $w$. 
We will use the term \emph{$A$-matching} to refer to such a partial matching of $w$. 
Because of the alternating pattern of barred letters in the doubled alphabet any $A$-matching partitions the maximal $B$-subwords into groups of the form $(B\o{B})^s$ or $(\o{B}B)^s$ when concatenated, 
 where $2s$ is the sum of the corresponding $\ell_i$ values. 
We will refer to these as \emph{$B$-groupings}. 
Thus, for each $A$-matching $\varphi$, 
 there is at least one non-crossing perfect matching of $w$ which extends $\varphi$ since $w$ was foldable.

See Figure~\ref{fig-A-match-ex-1} for one possible $A$-matching on our example word $w$. 
With this $A$-matching, the $B$-subwords have been partitioned into $(B\o{B})^5$ with $2\cdot 5 = \ell_1$ and $(B\o{B})^{11}$ with $2\cdot 11 = \ell_2 + \ell_3$. 

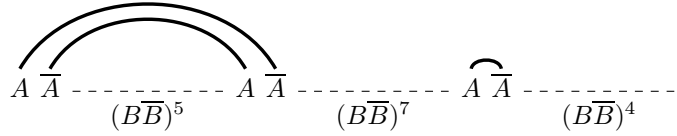
\begin{figure}[ht!]

 \begin{tikzpicture}
  \draw [above] (0,0) node {$A$};
  \draw [above] (.4,0) node {$\o{A}$};
  \draw [dashed](.7,.2)--(2.7,.2);
  \draw [below] (1.7,.2) node {$(B \o{B})^5$};
  \draw [above] (3,0) node {$A$};
  \draw [above] (3.4,0) node {$\o{A}$};
  \draw [dashed] (3.7,.2)--(5.7,.2);
  \draw [below] (4.7,.2) node {$(B \o{B})^7$};
  \draw [above] (6,0) node {$A$};
  \draw [above] (6.4,0) node {$\o{A}$};
  \draw [dashed] (6.7,.2)--(8.7,.2);
  \draw [below] (7.7,.2) node {$(B \o{B})^4$};
  \draw [line width=1.3] (0,.5) to [out=60, in=120] (3.4,.5);
  \draw [line width=1.3] (.4,.5) to [out=60, in=120] (3,.5);
  \draw [line width=1.3] (6,.5) to [out=80, in=100] (6.4,.5);
 \end{tikzpicture}

 \caption{An $A$-matching of $A\o{A}(B\o{B})^5 A\o{A} (B\o{B})^7 A\o{A} (B\o{B})^4$ % described in Example~\ref{ex-superset1} 
which can be extended in $C_5 C_{11}$ ways.}
 \label{fig-A-match-ex-1}
\end{figure}

For a $B$-grouping of length $2t$, %\todo{RMK: Definition? I infer that Figure 5 shows a B-grouping of length 2*5 and a B-grouping of length 2*11? \\DR: Added somewhat of a definition above.}
 there are $C_t$ ways to fold the group. 
Thus, each $A$-matching $\varphi$ of $w$ extends to $\prod_{i=1}^j C_{s_i}$ non-crossing perfect matchings of $w$, where $j$ is the number of $B$-groupings and each $s_i$ is half of the sum of the corresponding subset of $\{\ell_1,\ldots, \ell_m\}$. 
For the present example, the $A$-matching in Figure~\ref{fig-A-match-ex-1} extends to $C_5 C_{11}$ non-crossing matchings of $w$. 
Alternatively, for the $A$-matching $\varphi' = \{w[1]w[2], w[13]w[14], w[29]w[30]\}$, 
 there is nothing separating the maximal $B$-subwords, 
 so $\varphi'$ extends in $C_{5+7+4} = C_{16}$ ways. 

The following example highlights the structure that results from an $A$-matching when there are maximal $B$-subwords of odd length. 

\begin{example} \label{ex-superset2}
Let $w = A(\o{B} B)^4\o{B}A\o{A}(B\o{B})^6B\o{A}(B\o{B})^4 A \o{A}$.  
Here $\ell_1 = 9$, $\ell_2 = 13$ and $\ell_3 = 8$. 
Again, there are multiple non-crossing $A$-matchings, 
 but any such matching ensures that the maximal $B$-subwords of lengths $9$ and $13$ will be free to match with each other because they are the only two of odd length. 
One non-crossing $A$-matching is $\varphi = \{w[1]w[26],w[11]w[12], w[35]w[36]\}$ and another is $\varphi' = \{w[1]w[36], w[11]w[12],w[26]w[35]\}$. 
Both $\varphi$ and $\varphi'$ extend in $C_{11}\cdot C_4$ ways. 
(See Figure~\ref{fig-A-match-ex-2}.)
\end{example}

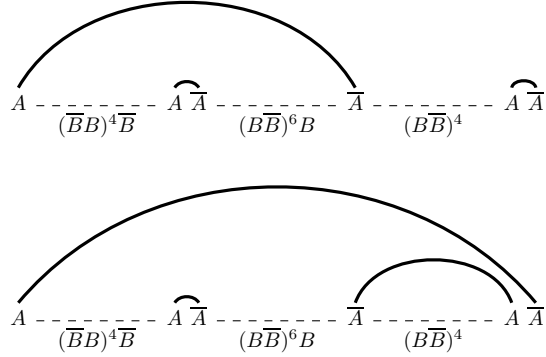
\begin{figure}[ht!]

 \begin{tikzpicture}[scale=.8, every node/.style={scale=.8}]
  \draw [above] (.4,0) node {${A}$};
  \draw [dashed] (.7,.2)--(2.7,.2);
  \draw [below] (1.7,.2) node {$(\o{B} B)^4 \o{B}$};
  \draw [above] (3,0) node {$A$};
  \draw [above] (3.4,0) node {$\o{A}$};
  \draw [dashed](3.7,.2)--(5.7,.2);
  \draw [below] (4.7,.2) node {$(B \o{B})^6 B$};
  \draw [above] (6,0) node {$\o{A}$};
  \draw [dashed] (6.3,.2)--(8.3,.2);
  \draw [below] (7.3,.2) node {$(B \o{B})^4$};
  \draw [line width=1.2] (.4,.5) to [out=60, in=120] (6,.5);
  \draw [line width=1.2] (3,.5) to [out=60, in=120] (3.4,.5);
  \draw [line width=1.2] (8.6,.5) to [out=80, in=100] (9,.5);
  \draw [above] (8.6,0) node {$A$};
  \draw [above] (9,0) node {$\o{A}$};
 \end{tikzpicture}
 \hspace{.3in}
 \begin{tikzpicture}[scale=.8, every node/.style={scale=.8}]
  \draw [above] (.4,0) node {${A}$};
  \draw [dashed] (.7,.2)--(2.7,.2);
  \draw [below] (1.7,.2) node {$(\o{B} B)^4 \o{B}$};
  \draw [above] (3,0) node {$A$};
  \draw [above] (3.4,0) node {$\o{A}$};
  \draw [dashed](3.7,.2)--(5.7,.2);
  \draw [below] (4.7,.2) node {$(B \o{B})^6 B$};
  \draw [above] (6,0) node {$\o{A}$};
  \draw [dashed] (6.3,.2)--(8.3,.2);
  \draw [below] (7.3,.2) node {$(B \o{B})^4$};
  \draw [line width=1.2] (.4,.5) to [out=50, in=130] (9,.5);
  \draw [line width=1.2] (3,.5) to [out=60, in=120] (3.4,.5);
  \draw [line width=1.2] (6,.5) to [out=70, in=110] (8.6,.5);
  \draw [above] (8.6,0) node {$A$};
  \draw [above] (9,0) node {$\o{A}$};
 \end{tikzpicture}

 \caption{All $A$-matchings of $A(\o{B} B)^4\o{B}A\o{A}(B\o{B})^6B\o{A}(B\o{B})^4 A \o{A}$ described in Example~\ref{ex-superset2} which can be extended in $C_5 C_{11}$ ways.}
 \label{fig-A-match-ex-2}
\end{figure}

We are now ready to state a theorem that defines a superset for $\cR(n,1)$ in terms of Catalan numbers.

\begin{theorem}\label{thm-Rnm-superset}
For a positive integer $n$, 
 every value in $\cR(n,1)$ can be expressed as $\sum_{r=1}^{h} p_r$, where each $p_r$ is of the form $\prod_{i = 1}^j  C_{s_i}$ with $0\leq h \leq C_t$ for some $0\leq t \leq \frac{n}{2}$ and  $n-t = s_1 + \cdots + s_j$ for some $1\leq j \leq t+1$.
\end{theorem}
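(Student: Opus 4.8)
The plan is to show that every value $k \in \cR(n,1)$ arises as the number of foldings of some word $w \in \hP(n,2)$, and then to analyze the combinatorial structure of those foldings using the $A$-matching framework developed in the preceding discussion. Since $\cR(n,1)$ records exactly the values $k$ for which some $k$-foldable word of length $2n$ exists, and since the reduction to $\hP(n,2)$ (the doubled-alphabet words strictly alternating between unbarred and barred letters) preserves foldability and the number of foldings, I may assume throughout that $w$ is such an alternating word, and without loss of generality that it begins with $A$. Let $t$ be the number of positions carrying the letter $A$ (so there are $t$ unbarred $A$'s, hence $t$ copies of $A$ total among the $2t$ $A$-or-$\overline A$ symbols), and note $0 \le t \le n$; the constraint $t \le n/2$ will need to be justified, presumably by observing that we may take whichever of the two letters occurs less often to be the ``$A$'' letter, so we can always arrange $t \le n-t$.

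The core of the argument is the counting identity already motivated by the examples. First I would observe that the total number of foldings of $w$ decomposes as a sum over the distinct non-crossing $A$-matchings $\varphi$ of $w$: every non-crossing perfect matching of $w$ restricts to a non-crossing matching of the $A,\overline A$ letters, so
\[
|\cV(w)| = \sum_{\varphi} (\text{number of perfect matchings extending } \varphi).
\]
Next I would identify the number of extensions of a fixed $\varphi$. As explained in the run-up to the theorem, fixing $\varphi$ forces each maximal $B$-subword to join its neighbors into $B$-groupings of the form $(B\overline B)^{s_i}$ or $(\overline B B)^{s_i}$, and a grouping of half-length $s_i$ can be folded in exactly $C_{s_i}$ ways independently; hence $\varphi$ extends in $\prod_{i=1}^{j} C_{s_i}$ ways, where $j$ is the number of $B$-groupings induced by $\varphi$. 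Setting $p_r := \prod_{i=1}^{j} C_{s_i}$ for the $r$-th $A$-matching gives $|\cV(w)| = \sum_{r=1}^{h} p_r$, where $h$ is the number of non-crossing $A$-matchings.

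It remains to establish the stated ranges for the parameters $h$, $t$, $j$, and the $s_i$. For $h$: the $A$-matchings are precisely the non-crossing \emph{perfect} matchings of the $2t$ symbols $A,\overline A$ (each such matching extends since $w$ is foldable), and these are counted by $C_t$, so $0 \le h \le C_t$. For $j$ and the $s_i$: each $B$-grouping has a positive half-length $s_i \ge 0$, the groupings partition all $2(n-t)$ $B$-symbols so $s_1 + \cdots + s_j = n-t$, and the number of groupings is at most one more than the number of matched $A$-pairs, giving $j \le t+1$ (with $j \ge 1$). The main obstacle I anticipate is precisely the careful bookkeeping in this last step---pinning down the exact bound $j \le t+1$ and handling odd-length maximal $B$-subwords, which (as Example~\ref{ex-superset2} shows) must pair across the word and can merge subwords in subtler ways than the even-length case. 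I would resolve this by arguing that the $t$ matched $A$-pairs act as at most $t$ ``separators'' among the cyclically-or-linearly arranged $B$-material, so they cut it into at most $t+1$ contiguous pieces; the odd-length subwords never create extra pieces because they are simply absorbed into these same groupings, changing only the values $s_i$ and not their count. Assembling these bounds yields the claimed form for every element of $\cR(n,1)$.
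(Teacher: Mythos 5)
Your proposal is correct and follows essentially the same route as the paper's proof: reduce to $\hP(n,2)$, take $t\le n/2$ by choosing the less frequent letter, sum over non-crossing $A$-matchings (at most $C_t$ of them), and observe that each matching cuts the $B$-material into at most $t+1$ groupings of the form $(B\o{B})^{s_i}$ or $(\o{B}B)^{s_i}$, each contributing a factor $C_{s_i}$. The parity/odd-subword issue you flag is handled in the paper exactly as you suggest, by noting that the number of letters between any matched $A$ and $\o{A}$ is even.
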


\begin{proof}
Again, it suffices to consider $w \in \hP(n,2)$. 
%It suffices to consider a foldable word $w$ in $\{\{A,B\}\times \{\o{A}, \o{B}\}\}^{n}$. 
Without loss of generality, assume $A$ occurs $t$ times in $w$ with $1\leq t \leq \frac{n}{2}$. 
Let $\ell_1,\ldots, \ell_m$ be the lengths of the maximal $B$-subwords of $w$.

Fix a non-crossing $A$-matching $\varphi$ of $w$. 
There are at most $C_t$ such matchings. 
Then $\varphi$ induces a grouping of the maximal $B$-subwords and thus a partition of the multiset $\{\ell_1, \ell_2, \ldots, \ell_m\}$ into multisets $S_1, S_2, \ldots, S_q$. 

Let $r_i$ be the sum of the elements in $S_i$. 
We claim each $r_i$ is even. 
This follows from the fact that the number of letters between any $A$ and $\o{A}$ is even and all $A$ and $\o{A}$ letters are paired in a non-crossing perfect matching. 
%Indeed, if $r_i$ was odd then an $A$ was paired with an $\o{A}$ in a way that leaves an odd number of $\{B,\o{B}\}$ in the subword in between.
%However, the number of letters between any $A$ and $\o{A}$ is even, there must  also be an odd number of $\{A,\o{A}\}$ letters in this subword.
%But then the $A$-matching is either crossing or incomplete, a contradiction.

Therefore, we can write $r_i = 2s_i$ for some $s_i \in\mathbb{N}$. 
Moreover, the $B$-subwords left to pair together by the $A$-matching group to have the form $(B\o{B})^{s_i}$ or $(\o{B}B)^{s_i}$ and thus can be matched in $C_{{s_i}}$ ways.

Since there are $t$ letters which are $A$, the arcs from the $A$-matching partition the $B$-subwords into at most $t+1$ groups with lengths $2s_1, 2s_2, \ldots, 2s_j$ where $j \leq t+1$ and $\sum_{i=1}^j s_i = n-t$. 
The number of ways to extend the $A$-matching to a non-crossing matching of $w$ is $\prod_{i=1}^j C_{s_i}$.
\end{proof}

Computing the set described in Theorem~\ref{thm-Rnm-superset}, we have the following supersets of $\cR(n,m)$ where the ellipses indicate that all integers in that range are present in the set.
\[
\begin{array}{l c l}
    \cR(1,1) & \subseteq & \{0, 1\}; \\
    \cR(2,1) & \subseteq & \{0, 1, 2\}; \\
    \cR(3,1) & \subseteq & \{0, 1, 2, 5\}; \\
    \cR(4,1) & \subseteq & \{0, \ldots, 5, 14\}; \\
    \cR(5,1) & \subseteq & \{0, \ldots, 7, 10, 14, 42\}; \\
    \cR(6,1) & \subseteq & \{0, \ldots, 22, 25, 28, 42, 132\}; \\
    \cR(7,1) & \subseteq & \{0, \ldots, 52, 56, 57, 58, 60, 61, 70, 84, 132, 429\}; \\
    \cR(8,1) & \subseteq & \{0, \ldots, 178,  182, 183, 184, 186, 187, 196, 210, 264, 429, 1430\}. 
%    \cR(9,1) & \subset & \{0, \ldots, 460, 462, \ldots, 488, 490, 491, 492, 494, 495, 496, 499, 500, 504, \ldots, 516, 518, 519, 520, 522, 523, 524, 528, 532, 533, 536, 538, 542, 546, 547, 548, 550, 551, 553, 556, 560, 561, 570, 588, 660, 858, 1430, 4862.
\end{array}
\]
 
The following corollary establishes the largest five values in $\mathcal{R}(n,1)$. Consequently, any integer between $C_{n-2}+C_{n-3}$ and $C_n$ is only in $\mathcal{R}(n,1)$ if it is one of these five values. 
\begin{corollary} \label{cor-Rn1-gaps}
 For $n \geq 13$,
 \[
  \cR(n,1) \subseteq \{0, \ldots, C_{n-2} + C_2\cdot C_{n-4}\} \cup \{C_{n-2} + C_{n-3}, C_3\cdot C_{n-3}, C_2\cdot C_{n-2}, C_{n-1}, C_n\}.
 \]
% where for such $n$,
% \[
%  C_{n-2} + 2\cdot C_{n-4} < C_{n-2} + C_{n-3} < 5\cdot C_{n-3} < 2\cdot C_{n-2} < C_{n-1} < C_n.
% \]
\end{corollary}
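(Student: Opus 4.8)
The plan is to work entirely within the parametrization from the proof of Theorem~\ref{thm-Rnm-superset}. By that result it suffices to bound, for every foldable $w\in\hP(n,2)$ in which $A$ occurs $t\le n/2$ times, the quantity $N(w)=\sum_{\varphi}P(\varphi)$, where $\varphi$ ranges over the (at most $C_t$) non-crossing $A$-matchings and $P(\varphi)=\prod_i C_{s_i}$ is the number of extensions of $\varphi$, the indices $s_i$ summing to $n-t$. Call $\varphi$ \emph{full} if it leaves all $B$-subwords in one group, i.e.\ $P(\varphi)=C_{n-t}$. I would first record three facts about Catalan numbers. (i) The \emph{product spectrum} $\{\prod_i C_{s_i}:s_i\ge 1,\ \sum_i s_i=K\}$ has largest element $C_K$ (one part), second-largest $C_1C_{K-1}=C_{K-1}$, third-largest $C_2C_{K-2}$, and \emph{every} product with at least two parts is at most $C_{K-1}$; thus there is a gap $(C_{K-1},C_K)$ containing no product. (ii) $C_tC_{n-t}$ is strictly decreasing in $t$ on $0\le t<\tfrac{n-1}{2}$, which follows from the identity $(t+2)(2n-2t-1)-(2t+1)(n-t+1)=3(n-2t-1)$. (iii) The hypothesis $n\ge 13$ is equivalent to $12\,C_{n-4}\le C_{n-2}$, hence (since $C_4=14$ and $C_2=2$) to $C_4C_{n-4}\le C_{n-2}+C_2C_{n-4}$, the stated threshold. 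Write $\tau\coloneqq C_{n-2}+C_2C_{n-4}$.

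Since $P(\varphi)\le C_{n-t}$ over at most $C_t$ matchings, $N(w)\le C_tC_{n-t}$. For $t\ge 4$, fact~(ii) gives $N(w)\le C_4C_{n-4}$ and fact~(iii) gives $C_4C_{n-4}\le\tau$, so these words contribute nothing above $\tau$. The cases $t=0,1$ are immediate: $t=0$ forces the empty matching and $N(w)=C_n$; $t=1$ has a unique matching, so $N(w)$ is a single product summing to $n-1$, equal to $C_{n-1}$ when full and at most $C_{n-2}\le\tau$ otherwise. This produces the special values $C_n$ and $C_{n-1}$.

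For $t=2$ the skeleton of the four $A$-letters admits $h\in\{1,2\}$ matchings, and when $h=2$ I split on the number $f$ of full matchings among the two. Using fact~(i): $f=2$ gives $N(w)=2C_{n-2}=C_2C_{n-2}$; $f=1$ gives $N(w)=C_{n-2}+P$ with $P$ a product summing to $n-2$ having at least two parts, so by the spectrum gap $N(w)$ is either $C_{n-2}+C_{n-3}$ (when $P=C_{n-3}$) or at most $C_{n-2}+C_2C_{n-4}=\tau$; and $f=0$ (or $h=1$) gives $N(w)\le 2C_{n-3}\le C_{n-2}\le\tau$. Thus the only values above $\tau$ arising here are $C_2C_{n-2}$ and $C_{n-2}+C_{n-3}$.

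The case $t=3$ is the crux and the main obstacle, because the superset of Theorem~\ref{thm-Rnm-superset} is strictly larger than $\cR(n,1)$ and formally admits the gap total $4C_{n-3}+C_{n-4}\in(\tau,\,C_3C_{n-3})$, which genuine words must avoid. Here $h\le C_3=5$, and $h=5$ forces the $A$-skeleton to be the fully alternating one (the unique balanced length-$6$ word realizing all five non-crossing matchings). If $h\le 4$, then $N(w)\le 4C_{n-3}\le\tau$, where the last step is the estimate $4C_{n-3}\le C_{n-2}+2C_{n-4}$ valid for $n\ge 13$. If $h=5$, write $f$ for the number of full matchings; since $C_{n-4}\le\tfrac12 C_{n-3}$ one gets $N(w)\le fC_{n-3}+(5-f)C_{n-4}$, which equals $5C_{n-3}=C_3C_{n-3}$ when $f=5$ and is at most $3C_{n-3}+2C_{n-4}\le 4C_{n-3}\le\tau$ when $f\le 3$. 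Everything therefore reduces to excluding $f=4$, and this is the key finite geometric lemma: for the alternating skeleton, label the seven gaps available to $B$-subwords and the regions cut out by the arcs of each of the five matchings, and verify directly that any two occupied gaps lie in a common region in exactly $0$, $2$, or all $5$ of the matchings (and an all-$5$ pattern occurs only when the $B$-subwords sit in the two outermost gaps, giving $N(w)=C_3C_{n-3}$). Consequently $f$ is never $4$, so the gap total cannot occur; parity of odd-length $B$-subwords only removes matchings, lowering $h$, which can only help. Granting this lemma, the totals exceeding $\tau$ are exactly the five listed values. I expect the geometric lemma to be the hard part to write carefully; it is also where $n\ge 13$ is sharp, since at $n=12$ the inequality $12C_{n-4}\le C_{n-2}$ fails and $C_4C_{n-4}$ slips above $\tau$ (indeed $196=C_4\cdot C_4\in\cR(8,1)$ is exactly such an extra value for smaller $n$).
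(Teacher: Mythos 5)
Your proposal is correct, and at the decisive point it takes a genuinely different route from the paper---one that is in fact needed. The paper's proof stays entirely inside the superset of Theorem~\ref{thm-Rnm-superset}: it bounds each $Z(n,t)$ arithmetically and, for $t=3$, asserts that the second-largest element of $Z(n,3)$, namely $4C_{n-3}+C_{n-4}$, is smaller than the threshold $\tau=C_{n-2}+C_2C_{n-4}$. That inequality is equivalent to $\tfrac{6(4n-14)}{(n-1)(n-2)}<1$, i.e.\ to $n^2-27n+86>0$, which holds only for $n\ge 24$; at $n=13$ one has $4C_{10}+C_9=72046>68510=C_{11}+2C_9$. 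So for $13\le n\le 23$ the superset of Theorem~\ref{thm-Rnm-superset} is \emph{not} contained in the set claimed by the corollary, and the purely arithmetic argument cannot close the $t=3$ case there. You identified exactly this obstruction and supplied the structural input the superset lacks: no word with three $A\o{A}$ pairs has exactly four full $A$-matchings, so the problematic total $4C_{n-3}+(\text{smaller product})$ is never realized by an actual word.

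I checked your finite geometric lemma and it is true: for the alternating length-$6$ skeleton (the only one, up to the mirror image obtained by barring, admitting all $C_3=5$ matchings), writing $g_0,\dots,g_6$ for the seven gaps, the five matchings partition the gaps into regions
$\{g_0,g_2,g_4,g_6\},\{g_1\},\{g_3\},\{g_5\}$;\;
$\{g_0,g_2,g_6\},\{g_1\},\{g_3,g_5\},\{g_4\}$;\;
$\{g_0,g_4,g_6\},\{g_1,g_3\},\{g_2\},\{g_5\}$;\;
$\{g_0,g_6\},\{g_1,g_3,g_5\},\{g_2\},\{g_4\}$;\;
$\{g_0,g_6\},\{g_1,g_5\},\{g_2,g_4\},\{g_3\}$,
so every pair of gaps shares a region in exactly $0$, $2$, or $5$ matchings, with $5$ only for $\{g_0,g_6\}$; hence $f\in\{0,1,2,5\}$ as you claim. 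Your supporting inequalities also check out: $3C_{n-3}+2C_{n-4}\le 4C_{n-3}$ always, and $4C_{n-3}\le\tau$ reduces to $n^2-15n+44\ge 0$, valid for $n\ge 11$ and hence for $n\ge 13$; the cases $t=0,1,2$ and $t\ge 4$ agree with the paper's arithmetic. What remains is only to write the region table out explicitly, which you flag yourself. Two harmless imprecisions: there are two alternating skeletons (mirror images, handled identically), and odd-length maximal $B$-subwords never actually remove skeleton matchings---every non-crossing matching of the skeleton extends, as shown in the proof of Theorem~\ref{thm-Rnm-superset}---though, as you note, fewer matchings would only strengthen the bound.
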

%\todo[inline]{WG: Some $C_1$s are skipped in the proof, and it make the proof a little bit confusing. It is my opinion that $C_1$ should be added back at some place for better understanding. Also some Catalan numbers are written directly as constants. Maybe it is better that we write them as the Catalan numbers first, then simplify to constants.}
\begin{proof}
For integers $n$ and $t$ with $0 \leq t \leq \floor{n/2}$, 
 let $Y(n,t)$ be the set of all integers of the form $\prod C_{b_i}$ with $b_1 + b_2 + ... + b_j = n-t$ and $1 \leq j \leq t+1$. 
For example, $Y(n,0) = \{ C_n \}$ and
\[
Y(n,1) = \left\{C_{n-1}C_0, C_{n-2}C_{1}, C_{n-3}C_{2}, \ldots, C_{\ceil{\frac{n-1}{2}}}C_{\floor{\frac{n-1}{2}}} \right\}.
\]
Then let $Z(n,t)$ be the set of all $h$-term sums of elements (with repetition) from $Y(n,t)$ with $0 \leq h \leq C_t$. 
Theorem~\ref{thm-Rnm-superset} says $\cR(n,1) \subseteq Z(n,0) \cup \cdots \cup Z(n, \ceil{n/2})$.

One convenient property of Catalan numbers is $C_{n-j}C_j < C_{n-i}C_i$ for $0\leq i < j \leq n/2$. 
As a consequence, $C_{n-t}C_0$ is the maximum value in $Y(n,t)$ and $C_{t}(C_{n-t}C_0)$ is the maximum value in  $Z(n,t)$.

Now note that $Z(n,0)=Y(n,0)$ and $Z(n,1)=Y(n,1)$ which are given above. 
When $t=2$, $C_2(C_{n-2}C_0)=2C_{n-2}$ is the maximum value in $Z(n,2)$. 
The next three largest values in $Z(n,2)$ are 
\[C_{n-2}C_{0} + C_{n-3}C_{1}> C_{n-3}C_0 + C_{n-3}C_0 = 2C_{n-3} > C_{n-3}C_1 + C_2C_{n-4},\] 
 all of which are less than $C_3C_{n-3}$. 

Since the largest two values in $Y(n,3)$ are $C_{n-3}C_0$ and $C_{n-4}C_1$, 
 the maximum value in $Z(n,3)$ is $C_3C_{n-3}$ while the next largest value is $4C_{n-3}+C_{n-4}$ which is smaller than $C_{n-2} + C_2C_{n-4}$.

Finally, for $4\leq t \leq n/2$, all values in $Z(n,t)$ are at most $C_4C_{n-4}$ which is less than $C_{n-2} + 2 C_{n-4}$.
\end{proof}

Although Theorem~\ref{thm-Rnm-superset} only defines a superset for $\cR(n,1)$, we show that the largest gaps stated in Corollary~\ref{cor-Rn1-gaps} do hold for $\cR(n,m)$ in general. 

\begin{proposition}
For $n \geq 13$,
\[
\cR(n,m) \subseteq \{0, 1, \ldots, C_{n-2} + C_2\cdot C_{n-4}\} \cup \{C_{n-2} + C_{n-3}, C_3\cdot C_{n-3}, C_2\cdot C_{n-2}, C_{n-1}, C_n\}.
\]
\end{proposition}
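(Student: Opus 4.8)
The plan is to reduce the statement to the already-proved case $m=1$ (Corollary~\ref{cor-Rn1-gaps}) by separately controlling words that genuinely use three or more distinct letters. As usual it suffices to work with alternating words: the doubling bijection $\cP(n,m)\leftrightarrow\hP(n,2m)$ preserves the set of valid trees, so $\cR(n,m)=\{|\cV(w)|:w\in\hP(n,2m)\}$. For such a $w$ let $c$ be the number of distinct types that actually occur, and split on $c\le 2$ versus $c\ge 3$. When $c\le 2$, the word uses at most two types; relabeling those two types as the doubled alphabet of the $m=1$ problem places $w$ in $\hP(n,2)$, so $|\cV(w)|\in\cR(n,1)$, and Corollary~\ref{cor-Rn1-gaps} puts this value in the asserted set. (The degenerate case $c=1$ is $w=(A\o{A})^n$ with $|\cV(w)|=C_n$.) Thus the whole content of the proposition is the case $c\ge 3$, where I would prove the much stronger bound $|\cV(w)|\le C_{n-2}$, which lies inside the small interval $\{0,\dots,C_{n-2}+C_2 C_{n-4}\}$.

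For $c\ge 3$, fix a most frequently occurring type $M$, with $n-t$ unbarred occurrences, so the remaining letters (from the other $\ge 2$ types) form a foldable subword $w'$ of length $2t$ with $2\le t\le n-1$. The block decomposition used in the proof of Theorem~\ref{thm-Rnm-superset} applies verbatim with $M$ in the role of $B$: every folding of $w$ arises uniquely by choosing a folding $\psi$ of $w'$ and then independently folding the induced $M$-groupings, which have the form $(M\o{M})^{s_i}$ with $\sum_i s_i=n-t$ and admit $C_{s_i}$ foldings each. Hence, using superadditivity $C_aC_b\le C_{a+b}$,
\[
 |\cV(w)| \;=\; \sum_{\psi\in\cV(w')}\prod_i C_{s_i^{\psi}} \;\le\; |\cV(w')|\cdot C_{n-t}.
\]
Applying the same decomposition to $w'$ by splitting off one of its types---which, because $w'$ has at least two types, occurs $p$ times with $1\le p\le t-1$---gives $|\cV(w')|\le C_p C_{t-p}\le C_{t-1}$. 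Combining these, $|\cV(w)|\le C_{t-1}C_{n-t}\le C_1 C_{n-2}=C_{n-2}$, the last inequality being the Catalan monotonicity $C_iC_{n-1-i}\le C_1C_{n-2}$ for $1\le i\le n-2$ already invoked in Corollary~\ref{cor-Rn1-gaps}. Therefore $|\cV(w)|\le C_{n-2}\le C_{n-2}+C_2C_{n-4}$, so a word with three or more types contributes only to the small interval and never to the five large special values.

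The main obstacle is the $c\ge 3$ step: recognizing that designating the majority type $M$ as the block letter and treating \emph{all} other letters collectively as the partitioning matching still yields the exact product formula, and---crucially---that the minority subword $w'$ now carries at least two types, so its folding count loses a full Catalan factor ($C_{t-1}$ rather than $C_t$). This is precisely what drives $|\cV(w)|$ below $C_{n-2}$, hence strictly below $C_{n-2}+C_{n-3}$ and all larger special values. Once this is in place, verifying the decomposition identity (a relabeling of the argument already given for Theorem~\ref{thm-Rnm-superset}) and the elementary Catalan inequalities is routine, and the $n\ge 13$ hypothesis is inherited solely from the appeal to Corollary~\ref{cor-Rn1-gaps} in the case $c\le 2$.
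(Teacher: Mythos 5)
Your proposal is correct and follows essentially the same route as the paper: reduce to $\hP(n,2m)$, observe that any word using at least three letter types has at most $C_{n-2} < C_{n-2}+C_2C_{n-4}$ foldings (via the product-of-Catalan-numbers bound together with $C_iC_j\le C_{i+j}$ and $C_iC_{s-i}\le C_1C_{s-1}$), and hand the remaining two-type case to Corollary~\ref{cor-Rn1-gaps}. The paper gets the key bound more directly from the injection of foldings into tuples of per-type non-crossing matchings ($k\le\prod_i C_{n_i}$), whereas you rederive it through a majority-type block decomposition; both yield the same estimate.
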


\begin{proof}
%Fix a $t$-foldable word in $\{\{A_1, A_2, \ldots, A_{2m}\}\times \{\o{A}_1, \o{A}_2, \ldots, \o{A}_{2m}\}\}^n$. 
Fix a $k$-foldable word $w \in \hP(n,2m)$. 
Let $n_i$ be the number of occurrences of $A_i$ in $w$. 
First note that $k \leq \prod_{i=1}^{2m} C_{n_i}$. This is tight if $w=(A_1\o{A}_1)^{n_1} (A_2\o{A}_2)^{n_2} \ldots (A_{2m}\o{A}_{2m})^{n_{2m}}$. 
Since $C_t \geq C_i C_{t-i}$ for $0\leq i \leq t/2$, if $w$ has at least $3$ letters and their complements, then $k\leq C_1C_1C_{n-2}$ which is less than $C_{n-2}+C_2 C_{n-4}$. 
So if $k \geq C_{n-2}+C_2C_{n-4}$, then $w \in \hP(n,2)$ and the result follows from Corollary~\ref{cor-Rn1-gaps}. 
\end{proof}

Having described a nontrivial superset for $\cR(n,m)$, we turn our attention to finding values that are contained in $\cR(n,1)$ and hence in $\cR(n,m)$. %Recall $C_k \in \cR(n,1)$ for all 
The next proposition verifies that the largest five values in the Corollary~\ref{cor-Rn1-gaps} superset truly are in $\cR(n,1)$.

\begin{proposition}
For $n\geq 3$,
\[
\{C_{n-2}+C_{n-3}, C_3C_{n-3}, C_2C_{n-2},  C_{n-1}, C_n  \}\subseteq \cR(n,1).
\]
\label{prop-large_gaps}
\end{proposition}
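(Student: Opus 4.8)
The plan is to exhibit, for each of the five target values, an explicit word in $\hP(n,2)$ realizing it, relying on the reduction established in the discussion preceding Theorem~\ref{thm-Rnm-superset}: it suffices to work with foldable words on $\{A,\o{A},B,\o{B}\}$ that alternate unbarred and barred letters. Four of the five values are products of two Catalan numbers, and for these I would use a single building block, namely that $(A\o{A})^t(B\o{B})^{n-t}$ is $C_tC_{n-t}$-foldable. Indeed, in any non-crossing perfect matching of this word every $A$ or $\o{A}$ is matched within the first $2t$ positions and every $B$ or $\o{B}$ within the last $2(n-t)$ positions; since these two blocks occupy disjoint intervals, a matching of the whole word is exactly an independent choice of a non-crossing matching of $(A\o{A})^t$ together with one of $(B\o{B})^{n-t}$. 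As each block of the form $(A\o{A})^k$ (or $(B\o{B})^k$) admits $C_k$ non-crossing matchings, the number of foldings is the product $C_t\cdot C_{n-t}$. Taking $t=0,1,2,3$ then realizes $C_0C_n=C_n$, $C_1C_{n-1}=C_{n-1}$, $C_2C_{n-2}$, and $C_3C_{n-3}$ respectively, all valid for $n\geq 3$. (Alternatively, $C_{n-1}$ and $C_n$ are already furnished by Proposition~\ref{prop-top-gap}.)

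The remaining value $C_{n-2}+C_{n-3}$ is not a product, so here I would pass to the $A$-matching viewpoint of Theorem~\ref{thm-Rnm-superset} and design a word with exactly two $A$-matchings that contribute $C_{n-2}$ and $C_{n-3}$ separately. The candidate is
\[ w = A\o{A}(B\o{B})^{n-3}A\o{A}B\o{B}, \]
which contains exactly two letters $A$ and hence at most $C_2=2$ non-crossing $A$-matchings, both of which are realizable. For the matching that pairs each $A$ with the adjacent $\o{A}$, the two arcs are tight and separate nothing, so both maximal $B$-subwords lie in a common region and concatenate into a single $B$-grouping of half-size $(n-3)+1=n-2$, contributing $C_{n-2}$ foldings. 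For the matching that nests the two $A$-arcs, the outer arc encloses the long $B$-subword while excluding the trailing $B\o{B}$: the long subword forms a grouping of half-size $n-3$ and the short one a grouping of half-size $1$, contributing $C_{n-3}\cdot C_1=C_{n-3}$. Since distinct $A$-matchings extend to disjoint families of foldings and these are the only two $A$-matchings, $w$ is $(C_{n-2}+C_{n-3})$-foldable.

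The step requiring the most care is the $B$-grouping analysis in this second construction: one must verify precisely which maximal $B$-subwords get concatenated versus separated under each $A$-matching, invoking the non-crossing condition to decide when two $B$-positions lie in a common region (an $A$-arc separates them only when it strictly encloses one but not the other). I would also check the degenerate small case $n=3$ directly, where the block $(B\o{B})^{n-3}$ is empty and $w=(A\o{A})^2B\o{B}$, confirming that the count still collapses to $C_1+C_0=2$. The product constructions are routine once the block-independence observation is in hand, so the sum case is the only genuine obstacle.
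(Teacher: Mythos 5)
Your proposal is correct and follows essentially the same route as the paper: the products $C_tC_{n-t}$ come from $(A\o{A})^t(B\o{B})^{n-t}$, and the sum $C_{n-2}+C_{n-3}$ comes from a word with exactly two matchings of the rare letter, one merging the remaining blocks into a group of half-size $n-2$ and the other splitting them into groups of half-sizes $n-3$ and $1$. The paper uses the word $(A\o{A})^{n-3}B\o{B}A\o{A}B\o{B}$ rather than your $A\o{A}(B\o{B})^{n-3}A\o{A}B\o{B}$, but these differ only by which letter plays the rare role, and your grouping analysis for both $A$-matchings is accurate.
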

\begin{proof}
The word $(A\o{A})^t (\o{A}A)^{n-t} \in \cP(n,1)$ has the same number of non-crossing perfect matchings as the corresponding word $(A\o{A})^t (B\o{B})^{n-t} \in \hP(n,2)$, 
 so we have $C_tC_{n-t}\in \cR(n,1)$ for $0\leq t \leq n$. 
It only remains to show that $C_{n-2}+C_{n-3}\in \cR(n,1)$ when $n\geq 3$. 
Consider the words
\[
w = (A\o{A})^{n -3}\,\o{A}AA\o{A}\o{A}A \quad \text{ and } \quad \hat{w}= (A\o{A})^{n -3}\,B\o{B}A\o{A}B\o{B},
\]
which have the same number of non-crossing perfect matchings. 
There are only two $B$-matchings in $\hat{w}$. 
When each $B$ is matched with the $\o{B}$ that immediately follows it, there are $C_{n-2}$ ways to extend this to a non-crossing perfect matching of $\hat{w}$. 
The other $B$-matching can be extended in only $C_{n-3}$ ways.  
So $\hat{w}$, and hence $w$, has $C_{n-2}+C_{n-3}$ non-crossing perfect matchings.
\end{proof}

The Catalan numbers provided a way to describe the largest values in  $\cR$. 
Before discussing the smallest values, we use the Catalan numbers once more to establish a family of values in $\cR$. 

\begin{proposition} \label{prop-jCn}
For non-negative integers $j$ and $\ell$ that satisfy $2j\leq n-\ell$,
\[(j+1) C_{\ell} \in \cR(n,1).\]
\end{proposition}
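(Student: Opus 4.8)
The plan is to exhibit, for each pair $(j,\ell)$ with $2j\le n-\ell$, a single word that is $(j+1)C_\ell$-foldable, and then to reduce to the minimal length via monotonicity. Since a word on one letter has folding number equal to its number of non-crossing perfect matchings (each arc joining an $A$ to an $\o{A}$), I work entirely on $\{A,\o{A}\}$ and write $N(v)$ for the number of such matchings of a word $v$. The candidate is
\[
 w \;=\; A^{j}\,\o{A}^{\,j}\,(\o{A} A)^{\ell}\,A^{j}\,\o{A}^{\,j},
\]
the \emph{control word} $A^{j}\o{A}^{\,j}A^{j}\o{A}^{\,j}$ with an alternating block $(\o{A} A)^{\ell}$ spliced in at its center; it has exactly $2j+\ell$ pairs. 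If $N(w)=(j+1)C_\ell$, then $(j+1)C_\ell\in\cR(2j+\ell,1)$, and Proposition~\ref{prop-Rnm-n-monotone} gives $(j+1)C_\ell\in\cR(n,1)$ for every $n\ge 2j+\ell$, i.e.\ exactly when $2j\le n-\ell$. The guiding heuristic is that the control word supplies the factor $j+1$ while the central block behaves like an insulated copy of $(A\o{A})^{\ell}$ and supplies the factor $C_\ell$.

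First I would verify the factor $j+1$, i.e.\ the case $\ell=0$: $N(A^{j}\o{A}^{\,j}A^{j}\o{A}^{\,j})=j+1$. Labeling the four runs $P_1=A^{j}$, $Q_1=\o{A}^{\,j}$, $P_2=A^{j}$, $Q_2=\o{A}^{\,j}$ and orienting each arc from its $A$-endpoint to its $\o{A}$-endpoint, no arc can join two letters of the same run, so every arc has one of the four types $P_1\!\to\!Q_1$, $P_1\!\to\!Q_2$, $Q_1\!\to\!P_2$, $P_2\!\to\!Q_2$. Counting the endpoints contributed to each run shows that the multiplicities of these types are governed by the single parameter $a\in\{0,1,\dots,j\}$ equal to the number of $P_1\!\to\!Q_1$ arcs (the others then being $j-a$, $j-a$, $a$). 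For each fixed $a$ the non-crossing condition forces all arcs of each type into nested rainbows, so exactly one matching realizes each value of $a$; hence there are precisely $j+1$ matchings.

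To bring in the block I would run a recursion on the first letter and introduce the family
\[
 R_p \;:=\; N\!\left(A^{p}\,\o{A}^{\,j}\,(\o{A} A)^{\ell}\,A^{j}\,\o{A}^{\,p}\right),\qquad 0\le p\le j,
\]
so that $R_j=N(w)$. For $p\ge 1$ a monotone running-sum argument shows that the leading $A$ can be matched to only two partners: the $p$-th letter of the run $\o{A}^{\,j}$, which splits off an inner rainbow $A^{p-1}\o{A}^{\,p-1}$ (one matching) and leaves $\o{A}^{\,j-p}(\o{A} A)^{\ell}A^{j}\o{A}^{\,p}$ outside; and the final letter of the word, which encloses everything and leaves the $R_{p-1}$-word inside. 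Crucially, the oscillation of the central $(\o{A} A)^{\ell}$ keeps the running sum strictly negative across the block, so the first arc never reaches into it. Granting that the outside word in the first case always satisfies
\[
 N\!\left(\o{A}^{\,j-p}(\o{A} A)^{\ell}A^{j}\o{A}^{\,p}\right)=C_\ell,
\]
the recursion becomes $R_p=C_\ell+R_{p-1}$ with base $R_0=C_\ell$, whence $R_p=(p+1)C_\ell$ and $N(w)=R_j=(j+1)C_\ell$.

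The main obstacle is precisely the displayed identity, which is where the single-letter interaction between the spliced block and the flanking runs must be controlled: one must show that the leading $\o{A}^{\,j-p}$ and trailing $A^{j}\o{A}^{\,p}$ are forced to close off as rainbows, leaving the block's $\ell$ pairs free to match among themselves in exactly $C_\ell$ ways. I expect to prove this by a secondary induction on $\ell$ that mirrors the Catalan recursion $C_\ell=\sum_{i=0}^{\ell-1}C_iC_{\ell-1-i}$: matching the first $\o{A}$ of the block to each admissible $A$ partitions the remaining matchings as $C_iC_{\ell-1-i}$, while the surrounding runs contribute no additional freedom. Once this identity and the two-partner claim are established, the recursion $R_p=C_\ell+R_{p-1}$ together with the monotonicity reduction completes the argument.
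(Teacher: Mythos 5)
Your argument is correct, but it takes a genuinely different route from the paper's. The paper works with a single witness of length exactly $2n$, namely $(\o{A}A)^{\ell}A^{n-\ell-j}\o{A}^{\,j}A^{j}\o{A}^{\,n-\ell-j}$: the alternating prefix is forced to match within itself in $C_\ell$ ways, and the palindromic tail $A^{n-\ell-j}\o{A}^{\,j}A^{j}\o{A}^{\,n-\ell-j}$ is shown to fold in exactly $j+1$ ways by walking the local-move graph of Theorem~\ref{thm-graph}, one Type~2 move at a time starting from the path folding. You instead take the shortest witness $A^{j}\o{A}^{\,j}(\o{A}A)^{\ell}A^{j}\o{A}^{\,j}$, count its non-crossing matchings by a first-arc recursion, and then invoke Proposition~\ref{prop-Rnm-n-monotone} to pass from length $2(2j+\ell)$ up to length $2n$. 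I checked the two steps you flag as the remaining work, and both hold: the two-partner claim follows from the running count $\#A-\#\o{A}$, which returns to zero at an $\o{A}$ only at position $2p$ and at the final letter (inside the block it equals $p-j-1\le -1$ at every $\o{A}$), and the identity $N\!\left(\o{A}^{\,j-p}(\o{A}A)^{\ell}A^{j}\o{A}^{\,p}\right)=C_\ell$ does follow from the induction you sketch, since each leading $\o{A}$ is forced onto the corresponding letter of $A^{j}$ (peeling off a rainbow), after which the block obeys the Catalan recursion and the trailing $A^{p}\o{A}^{\,p}$ contributes a single forced matching. What your route buys is a self-contained enumeration of matchings that never touches the local-move machinery; what it costs is the dependence on Wagner's monotonicity result and an extra sub-lemma, whereas the paper's padding of the word to length exactly $2n$ makes the factors $C_\ell$ and $j+1$ visibly independent with less bookkeeping.
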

\begin{proof}
Consider the word $w_{j,\ell} = (\o{A}A)^{\ell}A^{n-\ell - j}\o{A} \,^j A^j\o{A}\,^{n-\ell-j} \in \cS(n,1)$.

Each $A$ in the prefix $(\o{A}A)^{\ell}$ must match with an $\o{A}$ also in the prefix. 
Otherwise, the number of $A$'s and $\o{A} 's$ between the matched pair will not be equal. 
There are $C_{\ell}$ ways to define a non-crossing matching on $(\o{A}A)^{\ell}$.

The remainder of the word 
$A^{n-\ell-j}\o{A}\,^j A^j \o{A}\,^{n-\ell-j}$
matches in $(j+1)$ ways since $j<n-\ell-j$. 
It can be folded onto a path of length $n-\ell$ rooted at one end. 
Then after making the only Type~2 move available, one more available move is created. 
After $j$ of these moves, we will have explored the space of all foldings.
Thus $w_{j,\ell}$ folds in $(j+1)C_{\ell}$ ways.
\end{proof}

%%%%%%%%%%%%%%%%%%%%%%%%%
\subsection{Small Values in $\cR(n,1)$}

In search of the smallest value which is not in $\cR(n,1)$, we find that (aside from $3 \not\in\cR(3,1)$) all integers $i\leq n$ are in $\cR(n,1)$.
%Given a word $w$, let $w[i]$ denote the $i^{\text{th}}$ letter.

\begin{proposition}\label{prop-small}
	If $n\geq 4$, then $\{0,1,\ldots,n\}\subseteq \cR(n,1)$.
\end{proposition}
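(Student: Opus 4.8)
The plan is to exhibit, for each target value $k\in\{0,1,\ldots,n\}$, an explicit word in $\cS(n,1)$ (equivalently in $\hP(n,2)$) that is $k$-foldable, leveraging the constructions already developed earlier in the section. First I would dispose of the large and structured values using the tools at hand: the values $0$ and $1$ are immediate ($0$ from any non-foldable word such as $A^{2n}$, and $1$ from the $1$-foldable words enumerated in the $m=1$ example, e.g.\ $A^n\o{A}\,^n$), and Proposition~\ref{prop-jCn} with $\ell=0$ already yields $(j+1)C_0=j+1\in\cR(n,1)$ for every $j$ with $2j\le n$, which covers all integers from $1$ up to $\floor{n/2}+1$. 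Thus the essential remaining work is to realize the integers strictly between roughly $n/2$ and $n$ as fold counts.

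The natural approach for the middle range is to generalize the construction in Proposition~\ref{prop-jCn}. The word $w_{j,\ell}=(\o{A}A)^\ell A^{n-\ell-j}\o{A}\,^j A^j\o{A}\,^{n-\ell-j}$ folds in $(j+1)C_\ell$ ways, so taking $\ell=1$ (so $C_1=1$) recovers $j+1$ again, but taking $\ell=1$ together with small $j$ gives nothing new. Instead I would look for words whose fold count is an arbitrary integer in $[1,n]$ by concatenating a ``linear'' gadget that contributes an additive or small multiplicative factor with a tail of $A\o{A}$ pairs that are forced. Concretely, the mechanism from Proposition~\ref{prop-jCn}---folding a prefix onto a rooted path where each successive Type~2 move opens exactly one new move, so that after exploring $j$ moves one obtains $j+1$ foldings---can be isolated so that the path gadget alone produces exactly $i$ foldings for any prescribed $i$, using a block of length $2\lceil \text{(enough)}\rceil$ and padding the rest with forced pairs $(A\o{A})$ that each contribute a factor of $C_1=1$. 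The key identity to track is that a word of the shape $A^{p}\o{A}\,^{q}A^{q}\o{A}\,^{p}$ (or a suitable variant) folds in exactly $\min(p,q)+1$ ways, a count that ranges over all integers as $p,q$ vary, and that prepending forced pairs does not change the count.

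The main steps, in order, would be: (1) handle $k=0$ and $k=1$ directly; (2) invoke Proposition~\ref{prop-jCn} to obtain every $k\le \floor{n/2}+1$; (3) for $\floor{n/2}+1 < k \le n$, construct a word consisting of a ``staircase'' block that folds in exactly $k$ ways, concatenated with forced $A\o{A}$ pairs to bring the total length up to $2n$; and (4) verify the fold count by the same path-and-local-moves argument used in Proposition~\ref{prop-jCn}, namely that the state space graph is connected (Theorem~\ref{thm-graph}) and that the available Type~2 moves from the greedy folding are enumerated by the staircase structure. I expect step~(3) to be the main obstacle: producing a gadget whose fold count is \emph{exactly} a prescribed $k$ in the upper half of $[0,n]$, while keeping the total word length pinned at $2n$, requires balancing the block length against the number of padding pairs, and one must confirm that the padding genuinely contributes a multiplicative factor of $1$ rather than interacting with the gadget to create unexpected additional local moves. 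Verifying the absence of such spurious moves---equivalently, that every padding pair is forced in the greedy folding and remains forced throughout the state space---is the delicate point, and I would argue it by showing the corresponding edge-colored tree (via the bijection of Theorem~\ref{thm-Tnm-Cnm}) has its padding edges colored so as never to be incident to a same-colored edge.
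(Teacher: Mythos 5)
There is a genuine gap at your step~(3), and it is fatal to the construction as described. Your ``key identity'' gadget $A^{p}\o{A}\,^{q}A^{q}\o{A}\,^{p}$ folds in $\min(p,q)+1$ ways, but it has length $2(p+q)$, so within a word of total length $2n$ you are constrained to $p+q\le n$ and hence to a fold count of at most $\min(p,q)+1\le \tfrac{p+q}{2}+1\le \tfrac{n}{2}+1$. Padding with forced $A\o{A}$ pairs only consumes more of the length budget without raising the count. So this gadget tops out at exactly the same threshold $\lfloor n/2\rfloor+1$ that you already reached via Proposition~\ref{prop-jCn} with $\ell=0$; it produces nothing new in the range $\lfloor n/2\rfloor+1< k\le n$, which is precisely the half of the interval that remains to be realized. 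You correctly flagged step~(3) as the main obstacle, but the mechanism you propose to overcome it provably cannot work: any construction whose count is ``(number of available moves along a path) $+1$'' pays for each unit of the count with two letters on each side of the palindrome, which is twice too expensive.

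The paper's proof closes this gap with a different word, $w_\ell=\o{A}A^{\ell}\o{A}\,^{j}A^{j}\o{A}\,^{\ell}A$ with $j=n-1-\ell$, whose crucial feature is the outer pair $\o{A}\cdots A$ wrapping around the palindromic core. This wrapper creates a second, parallel family of foldings (a Type~2 move at the root sends each tree in one family to one in the other), which \emph{doubles} the count to $2j+2$ when $j<\ell$, and in the regime $\ell\le j$ a case analysis on where $w_\ell[1]$ bonds gives $2\ell+3$. The doubling (respectively the $+3$ offset) is exactly what lets the count climb to $n$ within length $2n$: together these two regimes cover all even values in $[2,n]$ and all odd values in $[5,n]$, with $0$, $1$, $3$ handled by separate explicit words. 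To repair your argument you would need to replace the staircase gadget with something exhibiting this kind of multiplicative branching rather than a purely additive chain of local moves.
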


\begin{proof}
	First notice that $A^{2n}$ is not foldable and $A^n \o{A}\,^n$ is $1$-foldable for any $n$. 
    Also, $A^{n-2}\o{A}\,^2A^2\o{A}\,^{n-2}$ is $3$-foldable when $n\geq 4$. 
    Thus we have $\{0,1,3\} \subseteq \cR(n,1)$ for $n \geq 4$. 

	Let $n$ be a positive integer. 
    Consider the word $w_\ell=\o{A}A^{\ell}\o{A}\,^{j}A^{j}\o{A}\,^{\ell}A$ where $1 \leq \ell < n$ and $j=n-1-\ell$.  %Notice that the rooted path of length $n$ is $w_{\ell}$-valid.
    We will attain the even values in the interval $[2,n]$ when $j < \ell$ and the odd values in the interval $[5,n]$ when $\ell \leq j$.

	If $j < \ell$, that is $0\leq j\leq\frac{n-2}{2}$, we find that $2j+2\in \cR(n,1)$. 
    To see this, observe that $w_{\ell}$ folds around both trees in Figure~\ref{fig-small} provided $\alpha\in \{0,1,\ldots, j\}$.
    Thus $w_{\ell}$ is at least $(2j+2)$-foldable. 
    From the tree on the left, there is a Type~2 move that will transform the tree into the one on the right. 
    At the vertex of degree $4$, there is a Type~1 move (if $\alpha>0$) and a Type~2 move (if $\alpha<j$), each of which results in a tree of the same description with a different value of $\alpha$. 
    A similar argument can be made for the tree on the right. 
    As the graph $G_{w_{\ell}}$ of $w_{\ell}$-valid trees is connected, $w_{\ell}$ is exactly $(2j+2)$-foldable. 
	
	% The word $w_k$ may be folded on this tree in Figure~\ref{fig-small} as long as $\alpha\leq j$. Given a folding with $\ell=\alpha$, notice that if $0\leq\ell<j$ a Type~2 move alows us to find a folding with $\ell=\alpha+1$, and if $0<\ell\leq j$ a Type~1 move allows us to find a folding with $\ell=\alpha-1$. For each of these foldings with $\ell\in \{0,1,\ldots, j\}$ we notice that we may use a Type~2 move to get figure 2 below. Given something in Figure~2 with $\ell=\alpha$, we may make similar moves giving us foldings with $\ell=\alpha-1$ and $\ell=\alpha+1$. For each $\ell \in \{0,1,\ldots,j\}$ we have two possible foldings, one in Figure~1 and one in Figure~2. This means there are $2j+2$ possible foldings of the word $w_k$ if $j<\frac{n-1}{2}$.

	If $\ell \leq j$, that is $1\leq \ell <\frac{n}{2}$, we find that $2\ell+3\in \cR(n,1)$. First observe that $w_{\ell}[1]$ must bond with $w_{\ell}[2]$, $w_{\ell}[2n]$ or $w_{\ell}[2j+2]$ based on the subwords created. 
    In the case that $w_{\ell}[1]$ bonds with $w_{\ell}[2n]$, the subword $A^{\ell}\o{A}\,^{j}A^{j}\o{A}\,^{\ell}$ folds in exactly $\ell+1$ ways (as on the left tree in Figure~\ref{fig-small}). 
    %The resulting trees in this case have the form of the left tree in Figure~\ref{fig-small}. 
    In the case that $w_{\ell}[1]$ bonds with $w_{\ell}[2]$, then $w_{\ell}[2n]$ bonds with either $w_{\ell}[2n-1]$ or $w_{\ell}[2\ell+1]$, the first extends to $\ell$ different foldings of $w_{\ell}$ and the second extends in only one way. 
    Finally, if $w_{\ell}[1]$ bonds with $w_{\ell}[2j+2]$, there is again only one way to extend this to a folding of $w_{\ell}$. 
    Thus $w_{\ell}$ is $(2\ell+3)$-foldable. 
	
\begin{figure}[ht!]

  \begin{tikzpicture}%[every node/.style={scale=.5}]
  \coordinate (bottom) at (0,-1);
  \coordinate (top) at (0,5);
  \coordinate (mid) at (0,2);
  \coordinate (left) at (-3,1);
  \coordinate (right) at (3,1);
  \foreach \x in {1,2,3,4,5} \path (bottom)--(mid) coordinate [pos=.20*\x] (b\x) {};
  \foreach \x in {1,2,3,4,5} \path (mid)--(top) coordinate [pos=.20*\x] (t\x) {};
  \foreach \x in {1,2,3,4} \path (left)--(mid) coordinate [pos=.25*\x] (l\x) {};
  \foreach \x in {1,2,3,4} \path (right)--(mid) coordinate [pos=.25*\x] (r\x) {};

  \draw (top) node[above] {\textit{\small{root}}};
  \draw (bottom)--(b2);
  \draw (b3)--(mid);
  \draw (mid)-- (t1) (t2)--(top);
  \path (b2)--(b3) node [pos=.7] {$\vdots$};
  \path (t1)--(t2) node [pos=.7] {$\vdots$};
  \foreach \x in {1,2,3,4,5} \draw [fill=black] (b\x) circle (0.06);
  \foreach \x in {1,2,3,4,5} \draw [fill=black] (t\x) circle (0.06);
  \draw [fill=black] (bottom) circle (0.06);

 \draw (left)--(l1) (l2)--(mid)--(r2) (r1)--(right);
  \path  (l1) -- (l2) node [midway, sloped] {$\cdots$};
  \path  (r1) -- (r2) node [midway, sloped] {$\cdots$};
  \foreach \x in {1,2,3,4} \draw [fill=black] (l\x) circle (0.06);
  \foreach \x in {1,2,3,4} \draw [fill=black] (r\x) circle (0.06);
  \draw [fill=black] (left) circle (0.06);
  \draw [fill=black] (right) circle (0.06);
  
     \begin{scope}[every node/.style={scale=.7}]

   \foreach \x in {1,2,4,5} \path (bottom)--(mid) node [pos = .20*\x-.1, left=-.05] {$\overline{A}$};
      \foreach \x in {1,2,4,5} \path (bottom)--(mid) node [pos = .20*\x-.1, right=-.05, yshift=-1] {${A}$};
         \foreach \x in {1,3,4} \path (mid)--(top) node [pos = .20*\x-.1, left=-.05, yshift=-1] {${A}$};
      \foreach \x in {1,3,4} \path (mid)--(top) node [pos = .20*\x-.1, right=-.05] {$\overline{A}$};
      \foreach \x in {5} \path (mid)--(top) node [pos = .20*\x-.1, left=-.05] {$\overline{A}$};
      \foreach \x in {5} \path (mid)--(top) node [pos = .20*\x-.1, right=-.05, yshift=-1] {${A}$};
      
      \foreach \x in {1,3,4} \path (left)--(mid) node [pos = .25*\x - .15, above=0] {$A$};
        \foreach \x in {1,3,4} \path (left)--(mid) node [pos = .25*\x - .15, below=0] {$\overline{A}$};   
              \foreach \x in {1,3,4} \path (right)--(mid) node [pos = .25*\x - .15, above=0] {$\overline{A}$};
        \foreach \x in {1,3,4} \path (right)--(mid) node [pos = .25*\x - .15, below=0] {${A}$};   
\end{scope}
  
\path (left)--(mid) node [pos=.95] (left1) {};
  \draw [decoration={brace}, decoration={raise=2ex}, decorate] (left)--  (left1) node [midway,above=.35, sloped] {$\alpha$};
\path (right)--(mid) node [pos=.95] (right1) {};
  \draw [decoration={brace}, decoration={raise=2ex}, decorate] (right1)--(right) node [midway,above=.35, sloped] {$\alpha$};
\end{tikzpicture}
 \hspace{.4in}
\begin{tikzpicture}%[every node/.style={scale=.5}]
  \coordinate (bottom) at (0,-1);
  \coordinate (top) at (0,5);
  \coordinate (mid) at (0,2);
  \coordinate (left) at (-3,1);
  \coordinate (right) at (3,1);
  \foreach \x in {1,2,3,4,5} \path (bottom)--(mid) coordinate [pos=.20*\x] (b\x) {};
  \foreach \x in {1,2,3,4,5} \path (mid)--(top) coordinate [pos=.20*\x] (t\x) {};
  \foreach \x in {1,2,3,4} \path (left)--(mid) coordinate [pos=.25*\x] (l\x) {};
  \foreach \x in {1,2,3,4} \path (right)--(mid) coordinate [pos=.25*\x] (r\x) {};
  
  \coordinate (lt) at (-.75,3.5);
  \coordinate (rt) at (.75,3.5);
  \draw (lt)--(t3)--(rt);
  \foreach \x in {lt,rt} \draw[fill=black]  (\x) circle (0.06); 
  
  \draw (0,4) node[above] {\textit{\small{root}}};
  \draw (bottom)--(b2);
  \draw (b3)--(mid);
  \draw (mid)-- (t1) (t2)--(t3);
  \path (b2)--(b3) node [pos=.7] {$\vdots$};
  \path (t1)--(t2) node [pos=.7] {$\vdots$};
  \foreach \x in {1,2,3,4,5} \draw [fill=black] (b\x) circle (0.06);
  \foreach \x in {1,2,3} \draw [fill=black] (t\x) circle (0.06);
  \draw [fill=black] (bottom) circle (0.06);

 \draw (left)--(l1) (l2)--(mid)--(r2) (r1)--(right);
  \path  (l1) -- (l2) node [midway, sloped] {$\cdots$};
  \path  (r1) -- (r2) node [midway, sloped] {$\cdots$};
  \foreach \x in {1,2,3,4} \draw [fill=black] (l\x) circle (0.06);
  \foreach \x in {1,2,3,4} \draw [fill=black] (r\x) circle (0.06);
  \draw [fill=black] (left) circle (0.06);
  \draw [fill=black] (right) circle (0.06);
  
\path (left)--(mid) node [pos=.95] (left1) {};
  \draw [decoration={brace}, decoration={raise=2ex}, decorate] (left)--  (left1) node [midway,above=.35, sloped] {$\alpha$};
\path (right)--(mid) node [pos=.95] (right1) {};
  \draw [decoration={brace}, decoration={raise=2ex}, decorate] (right1)--(right) node [midway,above=.35, sloped] {$\alpha$};
  
   \begin{scope}[every node/.style={scale=.7}]
   \path (lt)--(top) node [pos = .4, above=-.45] {$\overline{A}$};
   \path (lt)--(top) node [pos = .4, below=.47] {${A}$};
   \path (rt)--(top) node [pos = .4, above=-.45] {${A}$};
   \path (rt)--(top) node [pos = .4, below=.47] {$\overline{A}$};
 
   \foreach \x in {1,2,4,5} \path (bottom)--(mid) node [pos = .20*\x-.1, left=-.05] {$\overline{A}$};
      \foreach \x in {1,2,4,5} \path (bottom)--(mid) node [pos = .20*\x-.1, right=-.05, yshift=-1] {${A}$};
         \foreach \x in {1,3} \path (mid)--(top) node [pos = .20*\x-.1, left=-.05, yshift=-1] {${A}$};
      \foreach \x in {1,3} \path (mid)--(top) node [pos = .20*\x-.1, right=-.05] {$\overline{A}$};
      
      \foreach \x in {1,3,4} \path (left)--(mid) node [pos = .25*\x - .15, above=0] {$A$};
        \foreach \x in {1,3,4} \path (left)--(mid) node [pos = .25*\x - .15, below=0] {$\overline{A}$};   
              \foreach \x in {1,3,4} \path (right)--(mid) node [pos = .25*\x - .15, above=0] {$\overline{A}$};
        \foreach \x in {1,3,4} \path (right)--(mid) node [pos = .25*\x - .15, below=0] {${A}$};   
\end{scope}
  
\end{tikzpicture}
 \caption{The general $w_{\ell}$-valid trees for $w_{\ell}=\o{A}A^{\ell}\o{A}\,^{j}A^{j}\o{A}\,^{\ell}A$ in the proof of Proposition~\ref{prop-small}.}
 \label{fig-small}
\end{figure}
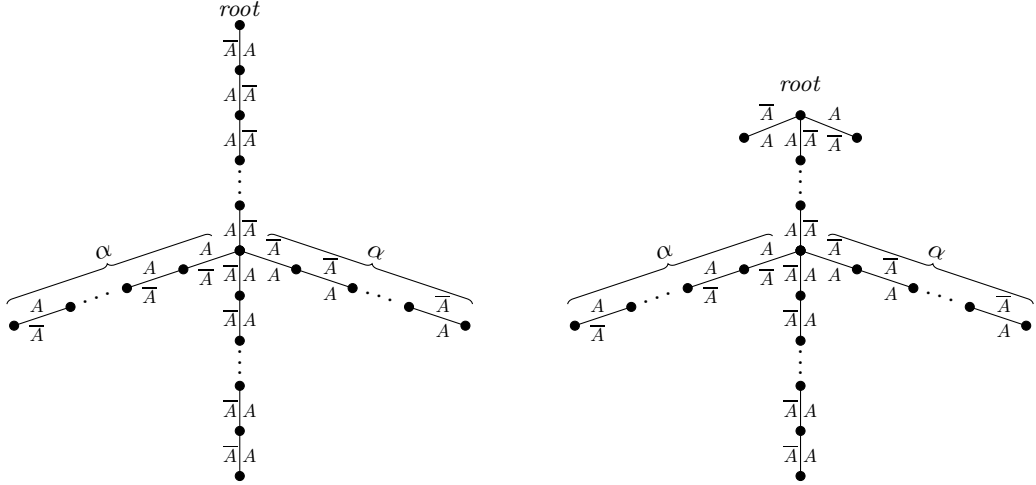
	
	%Combining these two cases, gives the desired result.
\end{proof}

%%%%%%%%%%%%%%%%%%%%%%%%%
\section{Conclusion}
%\todo[inline]{DR: The following only concludes our final section. We need a closing summary of the first part too. \\RMK: I added one thing. \\DR: Thank you, Rachel. The conclusion still feels a bit scant.}
There are still many questions to be answered regarding the sets $\mathcal{S}_k(n,m)$ and $\mathcal{R}(n,m)$. 
For example, the number of $2$-foldable words, $|\mathcal{S}_2(n,m)|$, is not known.
A complete description of $\mathcal{R}(n,m)$ remains to be determined.
We are particularly interested in the smallest value $k$ for which $k\not\in \mathcal{R}(n,m)$.

%%%%%%%%%%%%%%%%%%%%%%%%%
\section*{Acknowledgements}

We would like to thank Gavin King, Vaclav Kotesovec, and Derrick Stolee for many helpful conversations. % at the beginning of this work. 
All authors were supported in part by NSF-DMS grant \#1500662, ``The 2015 Rocky Mountain-Great Plains Graduate Research Workshop in Combinatorics.'' 
Smith was also supported in part by NSF-DMS grant \#1344199. 

Computations in this paper were performed with Maple\texttrademark{} 2016.2 and SageMath. 
Maple is a trademark of Waterloo Maple Inc. 
Algolib was developed by the Algorithms Project at INRIA. 
Sage code was executed in CoCalc by SageMath, Inc.

%%%%%%%%%%%%%%%%%%%%%%%%%
\bibliography{bib}

\begin{thebibliography}{1}

\bibitem{black2015}
Francis Black, Elizabeth Drellich, and Julianna Tymoczko.
\newblock Valid plane trees: Combinatrial models for {RNA} secondary structures
  with watson-crick base pairs.
\newblock To appear; preprint on website at
  \url{https://arxiv.org/abs/1501.03238}, 2017+.

\bibitem{heitsch}
Christine~E. Heitsch.
\newblock Combinatorics on plane trees, motivated by {RNA} secondary structure
  configurations.
\newblock Preprint on website at
  \url{http://people.math.gatech.edu/~heitsch/Pubs/plane.pdf}.

\bibitem{mallowswacher}
C.L. Mallows and K.W. Wachter.
\newblock Valency enuermation of rooted plane trees.
\newblock {\em Journal of the Australian Mathematical Society}, 13(6):472--476,
  1972.

\bibitem{quenell1994}
Gregory Quenell.
\newblock Combinatorics of free product graphs.
\newblock {\em Contemporary Math}, pages 257--281, 1994.
\newblock Preprint on website at
  \url{http://student.plattsburgh.edu/quenelgt/pubpdf/freeprod.pdf}.

\bibitem{stanleyec2}
Richard~P. Stanley.
\newblock {\em Enumerative combinatorics. Volume 2}.
\newblock Cambridge studies in advanced mathematics. Cambridge university
  press, Cambridge, New York, 1999.
\newblock Errata et addenda : p. 583-585.

\bibitem{wagner2015}
Wagner.
\newblock On some conjectures about combinatorial models for {RNA} secondary
  structures.
\newblock Unpublished manuscript, 2015.

\end{thebibliography}
\bibliographystyle{plain}

\end{document}